\newtheorem{theorem}{Theorem}[section]
\newtheorem{proposition}[theorem]{Proposition}
\newtheorem{lemma}[theorem]{Lemma}
\newtheorem{corollary}[theorem]{Corollary}
\theoremstyle{remark}
\newtheorem{remark}[theorem]{Remark}
\numberwithin{equation}{section}
\begin{document}

\title[Projective structures and differential operators]{Projective structures 
on Riemann surface and natural differential operators}

\author[I. Biswas]{Indranil Biswas}

\address{School of Mathematics, Tata Institute of Fundamental
Research, Homi Bhabha Road, Mumbai 400005, India}

\email{indranil@math.tifr.res.in}

\author[S. Dumitrescu]{Sorin Dumitrescu}

\address{Universit\'e C\^ote d'Azur, CNRS, LJAD, France}

\email{dumitres@unice.fr}

\subjclass[2010]{14F10, 14H40}

\keywords{Differential operator, projective structure, jet bundle, theta characteristic}

\date{}

\begin{abstract}
We investigate the holomorphic differential operators on a Riemann surface $M$. This is done by
endowing $M$ with a projective structure. Let $\mathcal L$ be a theta characteristic on $M$.
We explicitly describe the jet bundle $J^k(E\otimes {\mathcal L}^{\otimes n})$, where $E$ is
a holomorphic vector bundle over $M$ equipped with a holomorphic connection, for
all $k$ and $n$. This provides a description of global holomorphic differential operators
from $E\otimes {\mathcal L}^{\otimes n}$
to another holomorphic vector bundle $F$ using the natural isomorphism
$\text{Diff}^k(E\otimes {\mathcal L}^{\otimes n},\, F)\,=\, F\otimes
(J^k(E\otimes {\mathcal L}^{\otimes n}))^*$.
\end{abstract}

\maketitle

\section{Introduction}\label{sec1}

The study of natural differential operators between natural vector bundles is a major topic in global 
analysis on manifolds (see, for instance, \cite{Ni}, \cite{Te} and \cite{KMS}).

Our aim here is to study holomorphic differential operators between natural holomorphic 
vector bundles on a Riemann surface $M$. To be able to formulate their description, we make use 
of the existence of compatible projective structures on the Riemann surface $M$ (see \cite{Gu}
for projective structure on Riemann surfaces).

We recall that a projective structure on $M$ is given by a holomorphic coordinate atlas such 
that all the transition functions are restrictions of M\"obius transformations of
the projective line ${\mathbb C}{\mathbb P}^1$. The above mentioned approach 
to holomorphic differential operators on $M$ entails a detailed understanding of the action of 
$\text{SL}(2,{\mathbb C})$ on the jet bundles on ${\mathbb C}{\mathbb P}^1$.

Let $\mathbb V$ be a complex vector space of dimension two. For notational convenience, denote by $X$ the space 
${\mathbb P}({\mathbb V})$ of complex lines in $\mathbb V$. Let $L$ be the tautological
holomorphic line bundle of degree one on $X$. The rank two
trivial holomorphic vector bundle $X\times {\mathbb V}$ on $X$ with fiber $\mathbb V$ will be 
denoted by ${\mathbb V}_1$, while the $j$--fold symmetric product $\text{Sym}^j({\mathbb V}_1)$ of
${\mathbb V}_1$ will be 
denoted by ${\mathbb V}_j$. The holomorphically trivial line bundle $X\times {\mathbb C}$ on $X$ 
will be denoted by ${\mathbb V}_0$.

We prove the following (see Theorem \ref{thm1}):

\begin{theorem}\label{thm0}
If $n\, <\, 0$ or $n\, \geq\, k$, then
$$
J^k(L^n)\,=\, L^{n-k}\otimes {\mathbb V}_k\, .
$$
If $k\, > \, n\, \geq\, 0$, then
$$
J^k(L^n)\,=\, {\mathbb V}_n\oplus (L^{-(k+1)}\otimes {\mathbb V}_{k-n-1})\, .
$$
Both the isomorphisms are ${\rm SL}({\mathbb V})$--equivariant.
\end{theorem}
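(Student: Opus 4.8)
The plan is to run an induction on the jet order $k$ built on the fundamental (and $\mathrm{SL}(\mathbb V)$--equivariant) short exact sequence of jet bundles
$$0\,\longrightarrow\,(\Omega^1_X)^{\otimes k}\otimes L^n\,\longrightarrow\,J^k(L^n)\,\longrightarrow\,J^{k-1}(L^n)\,\longrightarrow\,0\, ,$$
which on $X=\mathbb P(\mathbb V)$ reads $0\to L^{n-2k}\to J^k(L^n)\to J^{k-1}(L^n)\to 0$ since $\Omega^1_X\cong L^{-2}$. The base case $J^0(L^n)=L^n$ is immediate. The anchor for the positive range is that for $n\ge 0$ the evaluation of $k$--jets of global sections gives an equivariant map $\mathbb V_n\cong H^0(X,L^n)\otimes\mathcal O_X\to J^k(L^n)$ (using $\mathrm{Sym}^n\mathbb V\cong\mathrm{Sym}^n\mathbb V^\ast$) which is fibrewise injective once $k\ge n$, because a nonzero section of $L^n$ cannot vanish to order $>n$ at a point; for $k=n$ a rank and degree count makes it an isomorphism, giving $J^n(L^n)\cong\mathbb V_n$.

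For the range $k>n\ge 0$ I would exploit this subbundle $\mathbb V_n\hookrightarrow J^k(L^n)$. Writing $Q_k:=J^k(L^n)/\mathbb V_n$ and using that the truncation $J^k(L^n)\to J^{k-1}(L^n)$ carries the section--jet copy of $\mathbb V_n$ isomorphically onto its counterpart, the snake lemma yields an equivariant sequence $0\to L^{n-2k}\to Q_k\to Q_{k-1}\to 0$. Dualizing this recursion and using $H^1(X,L^{j})=0$ for $j\ge -1$ shows inductively that $H^1(X,Q_k^\ast)=0$, whence $\mathrm{Ext}^1(Q_k,\mathbb V_n)=0$ and the sequence $0\to\mathbb V_n\to J^k(L^n)\to Q_k\to 0$ splits $\mathrm{SL}(\mathbb V)$--equivariantly. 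It then remains to identify $Q_k$ with $L^{-(k+1)}\otimes\mathbb V_{k-n-1}$, which I would do by comparing the recursion for $Q_k$ with the symmetric power of the tautological sequence $0\to L^{-1}\to\mathbb V_1\to L\to 0$, namely the natural sequence $0\to L^{-(k-n-1)}\to\mathbb V_{k-n-1}\to L\otimes\mathbb V_{k-n-2}\to 0$ twisted by $L^{-(k+1)}$.

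For the range $n<0$ or $n\ge k$ the same comparison drives the induction directly: both the jet sequence and the twisted symmetric--power sequence present $J^k(L^n)$ and $L^{n-k}\otimes\mathbb V_k$ as extensions of $L^{n-k+1}\otimes\mathbb V_{k-1}$ by $L^{n-2k}$. Here the key computation is that the equivariant group $\mathrm{Ext}^1_{\mathrm{SL}(\mathbb V)}(L^{n-k+1}\otimes\mathbb V_{k-1},\,L^{n-2k})$ is one--dimensional: it is the invariant part of $H^1(X,L^{-k-1})\otimes\mathbb V_{k-1}^\ast\cong\mathrm{Sym}^{k-1}\mathbb V\otimes\mathrm{Sym}^{k-1}\mathbb V^\ast$, which by Clebsch--Gordan contains the trivial representation exactly once. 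Two nonzero classes in a one--dimensional $\mathrm{Ext}$ space differ by a scalar and hence have isomorphic middle terms, so once both extensions are seen to be nonsplit one concludes $J^k(L^n)\cong L^{n-k}\otimes\mathbb V_k$ equivariantly.

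The main obstacle is precisely this nonsplitting dichotomy and its opposite behaviour in the two regimes. The symmetric--power sequence is visibly nonsplit, since the trivial bundle $\mathbb V_k$ has no line subbundle or quotient of nonzero degree. Showing that the jet extension is also nonsplit when $n\ne 0$ --- equivalently, that $J^k(L^n)$ is \emph{balanced} (all summands of degree $n-k$) rather than acquiring a more negative summand $L^{n-2k}$ --- is the delicate point; at first order it is the nonvanishing of the Atiyah class of $L^n$, the obstruction to a holomorphic connection, which is nonzero exactly when $\deg L^n\ne 0$, and I would propagate this through the truncations $J^k\to J^1$, or, failing a clean conceptual argument, pin down the splitting type by computing $h^0(X,J^k(L^n)\otimes L^m)$ for all $m$ from the long exact sequences of the twisted jet sequence. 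The mirror subtlety for $k>n\ge 0$ is that there the relevant $\mathbb V_n$--extension must instead be shown to \emph{split}, which is exactly what $H^1(X,Q_k^\ast)=0$ delivers; tracking the switch between these behaviours across the boundary $k=n+1$ is the principal bookkeeping burden of the proof.
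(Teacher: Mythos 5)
Your route is genuinely different from the paper's: you run an induction on $k$ along the jet sequence and do extension--class bookkeeping, whereas the paper computes $J^k(L^n)$ as a direct image on $X\times X$ (via the isomorphism ${\mathcal O}_{X\times X}(\Delta)\cong (p_1^*L)\otimes(p_2^*L)$ and the projection formula), exhibiting it directly as the kernel of an explicit contraction $\gamma_1$ when $n<0$, the cokernel of an explicit multiplication $\gamma_2$ when $n\geq k$, and splitting the middle range by the canonical retraction $J^k(L^n)\to J^n(L^n)={\mathbb V}_n$. Several of your steps are correct: the jet--evaluation argument giving $J^n(L^n)\cong{\mathbb V}_n$ and the subbundle ${\mathbb V}_n\subset J^k(L^n)$ for $k>n$, the recursion $0\to L^{n-2k}\to Q_k\to Q_{k-1}\to 0$, the vanishing $\mathrm{Ext}^1(Q_k,{\mathbb V}_n)=0$, and the Clebsch--Gordan count showing that the space of equivariant extensions of $L^{n-k+1}\otimes{\mathbb V}_{k-1}$ by $L^{n-2k}$ is one-dimensional. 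But there is a genuine gap at the crux: you never prove that the jet extension $0\to L^{n-2k}\to J^k(L^n)\to J^{k-1}(L^n)\to 0$ is nonsplit when $n<0$ or $n\geq k$, and without this your induction only yields a dichotomy between the theorem's answer and $L^{n-2k}\oplus (L^{n-k+1})^{\oplus k}$. Neither fallback you offer closes it. The Atiyah class does not ``propagate through the truncations $J^k\to J^1$'': the subbundle $L^{n-2k}=L^n\otimes K_X^{\otimes k}\subset J^k(L^n)$ maps to zero in $J^1(L^n)$ for $k\geq 2$, so there is no morphism of extensions along which nonvanishing could be transported. And computing $h^0(J^k(L^n)\otimes L^m)$ ``from the long exact sequences of the twisted jet sequence'' is circular: those sequences determine the answer only modulo their connecting homomorphisms, which encode exactly the splitting information you are after; making this computation honest (say by cohomology on $X\times X$) essentially reproduces the paper's proof. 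The same gap recurs, unacknowledged, in your identification $Q_k\cong L^{-(k+1)}\otimes{\mathbb V}_{k-n-1}$: that comparison likewise requires the recursion for $Q_k$ to be nonsplit, which nothing in your argument establishes.

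The gap is closeable within your framework, but by a computation you have not done: restrict the level-$k$ extension to the rank-two subquotient ${\mathcal K}:=\ker\bigl(J^k(L^n)\to J^{k-2}(L^n)\bigr)$, an extension of $L^n\otimes K_X^{\otimes (k-1)}$ by $L^n\otimes K_X^{\otimes k}$, hence classified by an element of $H^1(X,K_X)\cong{\mathbb C}$; a two-chart \v{C}ech computation shows this class is a nonzero multiple of $n-k+1$, which is nonzero throughout your regime (its vanishing at $n=k-1$ is the classical Bol splitting, consistent with the boundary case $k=n+1$ of the theorem), and nonsplitting of ${\mathcal K}$ forces nonsplitting of the full jet sequence. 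Two further points. Your $\mathrm{Ext}^1$-vanishing produces a splitting of $0\to{\mathbb V}_n\to J^k(L^n)\to Q_k\to 0$, but not automatically an $\mathrm{SL}({\mathbb V})$-equivariant one; you need either a reductivity argument (the splittings form an affine space under $\mathrm{Hom}(Q_k,{\mathbb V}_n)$ on which $\mathrm{SL}({\mathbb V})$ acts) or, better, the paper's canonical retraction $J^k(L^n)\to J^n(L^n)={\mathbb V}_n$, which makes equivariance evident and costs nothing. Finally, a trivial bundle on ${\mathbb P}^1$ certainly has line subbundles and quotients of nonzero degree; what it lacks is direct summands of nonzero degree (uniqueness of the Grothendieck splitting type), and that is the fact your ``visibly nonsplit'' claim actually relies on.
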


The natural forgetful homomorphism $J^k(L^n)\, \longrightarrow\, J^{k-1}(L^n)$ of
holomorphic vector bundles equipped with action of ${\rm SL}({\mathbb V})$ is described in
Proposition \ref{prop2}, Proposition \ref{prop3} and Proposition \ref{prop4}.

Since all the above mentioned isomorphisms and homomorphisms are ${\rm SL}({\mathbb 
V})$--equivariant, we are able to translate them into the set-up of a Riemann surface equipped 
with a projective structure. We recall that any projective structure on a Riemann surface $M$ has
an underlying holomorphic atlas on $M$ for which the transition maps are restrictions of elements in ${\rm 
SL}({\mathbb V})$ (see Section \ref{s4}).

Let $M$ be a compact Riemann surface equipped with a projective structure $P$. Using $P$,
the above mentioned tautological line bundle $L$ on $X$ produces a holomorphic line bundle $\mathbf L$
on $M$ with the property that ${\mathbf L}^{\otimes 2}\,=\, TM$. Moreover, using $P$, the trivial vector
bundle ${\mathbb V}_1$ produces a holomorphic vector bundle ${\mathcal V}_1$ of rank two over $M$
that fits in the following short exact sequence of holomorphic vector bundles on $M$:
$$
0\, \longrightarrow\, {\mathbf L}^*\, \longrightarrow\, {\mathcal V}_1\,\longrightarrow\,
{\mathbf L} \, \longrightarrow\, 0\, .
$$
This involves fixing a nonzero element of $\bigwedge^2\mathbb V$. The vector bundle ${\mathcal V}_1$
also has a holomorphic connection given by $P$.

The symmetric power $\text{Sym}^j({\mathcal V}_1)$ will be denoted by ${\mathcal V}_j$,
while the trivial line bundle ${\mathcal O}_M$ will be denoted by ${\mathcal V}_0$.

Let us now describe our results about natural differential holomorphic operators on a Riemann surface endowed with a projective structure.

The following is deduced using Theorem \ref{thm0} (see Corollary \ref{cor1}):

\begin{corollary}\label{cor0}
Let $M$ be a compact connected Riemann surface endowed with a projective structure $P$.
If $n\, <\, 0$ or $n\, \geq\, k$, then there is a canonical isomorphism
$$
J^k({\mathbf L}^n)\,=\, {\mathbf L}^{n-k}\otimes {\mathcal V}_k\, ,
$$
where ${\mathcal V}_k$ is defined above.

If $k\, > \, n\, \geq\, 0$, then there is a canonical isomorphism
$$
J^k({\mathbf L}^n)\,=\, {\mathcal V}_n\oplus ({\mathbf L}^{-(k+1)}\otimes {\mathcal V}_{k-n-1})\, .
$$
\end{corollary}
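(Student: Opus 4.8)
The plan is to deduce Corollary \ref{cor0} from Theorem \ref{thm0} by transporting the $\mathrm{SL}(\mathbb V)$--equivariant isomorphisms on $X = \mathbb P(\mathbb V)$ to the Riemann surface $M$ via the projective structure $P$. The conceptual point is that a projective structure furnishes $M$ with a holomorphic atlas whose transition functions lie in $\mathrm{SL}(\mathbb V)$ (acting as Möbius transformations, after a choice of lift), so any natural holomorphic vector bundle built functorially from $L$ on $X$ descends to a well-defined holomorphic vector bundle on $M$ precisely because the gluing data on overlaps is governed by these $\mathrm{SL}(\mathbb V)$--actions. The whole strategy rests on the fact that the jet functor $J^k(-)$ is itself natural with respect to biholomorphisms, hence commutes with this gluing construction.

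First I would make precise the descent dictionary: the line bundle $L$ on $X$ produces $\mathbf L$ on $M$ (with $\mathbf L^{\otimes 2} = TM$, as stated in the excerpt), the bundle $\mathbb V_1$ produces $\mathcal V_1$, and in general $\mathbb V_j = \mathrm{Sym}^j(\mathbb V_1)$ produces $\mathcal V_j = \mathrm{Sym}^j(\mathcal V_1)$, since the descent construction commutes with tensor and symmetric powers. Likewise $L^n$ produces $\mathbf L^n$. Next I would invoke the naturality of the jet bundle functor under the coordinate changes of the projective atlas: because the transition maps of $P$ are restrictions of elements of $\mathrm{SL}(\mathbb V)$, the locally defined jet bundles $J^k(\mathbf L^n)$ glue using exactly the $\mathrm{SL}(\mathbb V)$--equivariant transition data, so $J^k(\mathbf L^n)$ on $M$ is the bundle obtained by descending $J^k(L^n)$ from $X$.

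With this in place, the two cases of Theorem \ref{thm0} transfer verbatim. In the range $n<0$ or $n\geq k$, applying the descent construction to the $\mathrm{SL}(\mathbb V)$--equivariant isomorphism $J^k(L^n) = L^{n-k}\otimes \mathbb V_k$ yields the canonical isomorphism $J^k(\mathbf L^n) = \mathbf L^{n-k}\otimes \mathcal V_k$ on $M$; in the range $k>n\geq 0$, descending the equivariant splitting $J^k(L^n) = \mathbb V_n \oplus (L^{-(k+1)}\otimes \mathbb V_{k-n-1})$ produces $J^k(\mathbf L^n) = \mathcal V_n \oplus (\mathbf L^{-(k+1)}\otimes \mathcal V_{k-n-1})$. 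The equivariance is exactly what guarantees that the isomorphisms are compatible across chart overlaps, hence globally well-defined and canonical on $M$.

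The main obstacle I anticipate is purely in the bookkeeping of the descent mechanism rather than in any new geometric idea. One must verify that the construction sending an $\mathrm{SL}(\mathbb V)$--equivariant bundle on $X$ to a bundle on $M$ is a well-defined functor that commutes with $J^k(-)$, with direct sums, and with tensor and symmetric products; the subtle point is the dependence on the choice of a nonzero element of $\bigwedge^2\mathbb V$ (needed to trivialize the determinant and identify $L$ with $\mathbf L$ consistently), and checking that this choice does not obstruct the canonicity of the resulting isomorphisms. Once the naturality of $J^k$ under the projective transition maps is established, the rest follows formally, so the essential content is confirming that everything in sight is genuinely $\mathrm{SL}(\mathbb V)$--equivariant and therefore descends.
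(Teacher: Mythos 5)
Your proposal is correct and takes essentially the same route as the paper: the paper also deduces the corollary from Theorem \ref{thm0} by observing that all the isomorphisms there are $\mathrm{SL}(\mathbb V)$--equivariant, so they patch compatibly across the charts of the $\mathrm{SL}(\mathbb V)$--projective atlas (obtained after fixing a theta characteristic) to yield global canonical isomorphisms on $M$. The paper leaves this descent bookkeeping implicit, stating only that the corollary follows ``in a straight-forward manner,'' whereas you spell out the gluing mechanism, the naturality of $J^k(-)$, and the role of the choice of $\omega\in\bigwedge^2\mathbb V$ --- but the underlying argument is the same.
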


This enables us to deduce the following result (see Lemma \ref{lem2}):

\begin{lemma}\label{lem0}
Let $M$ be a compact connected Riemann surface endowed with a projective structure $P$. 
Consider $E$ a holomorphic vector bundle on $M$ equipped with a holomorphic connection.

If $n\, <\, 0$ or $n\, \geq\, k$, then there is a canonical isomorphism
$$
J^k(E\otimes {\mathbf L}^n)\,=\, E\otimes{\mathbf L}^{n-k}\otimes {\mathcal V}_k\, .
$$
If $k\, > \, n\, \geq\, 0$, then there is a canonical isomorphism
$$
J^k(E\otimes{\mathbf L}^n)\,=\, E\otimes ({\mathcal V}_n\oplus ({\mathbf L}^{-(k+1)}\otimes
{\mathcal V}_{k-n-1}))\, .
$$
\end{lemma}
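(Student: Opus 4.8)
The plan is to deduce Lemma~\ref{lem0} from Corollary~\ref{cor0} by tensoring with the vector bundle $E$ and using the fact that $E$ carries a holomorphic connection. The key structural observation is that whenever a holomorphic vector bundle $E$ is equipped with a holomorphic connection $\nabla$, there is a canonical isomorphism of jet bundles
\begin{equation}\label{eq:jetconn}
J^k(E\otimes W)\,=\, E\otimes J^k(W)
\end{equation}
for any holomorphic vector bundle $W$ on $M$. This is precisely the mechanism that lets a connection on $E$ trivialize the contribution of $E$ to the jet bundle: the connection provides a flat-in-$E$ way to differentiate sections of $E\otimes W$, so that taking $k$-jets only sees the jets of the $W$-factor. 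I would first establish \eqref{eq:jetconn}, and then apply it with $W\,=\,{\mathbf L}^n$, immediately substituting the two cases of Corollary~\ref{cor0}.

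To prove \eqref{eq:jetconn}, I would work with the short exact sequences defining the jet bundles. Recall that the $k$-jet bundle sits in the exact sequence
$$
0\, \longrightarrow\, (\Omega^1_M)^{\otimes k}\otimes (E\otimes W)\, \longrightarrow\, J^k(E\otimes W)\,\longrightarrow\, J^{k-1}(E\otimes W)\, \longrightarrow\, 0\, ,
$$
and similarly for $J^k(W)$ tensored by $E$. The holomorphic connection $\nabla$ on $E$ furnishes a canonical splitting datum: iterating $\nabla$ (using the induced connection on tensor powers, together with the Levi-Civita-type symbol of the base, which on a curve involves only $\Omega^1_M$) produces canonical maps that identify the associated graded pieces compatibly. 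Concretely, the connection gives a canonical isomorphism $J^k(E)\,=\,\bigoplus_{i=0}^{k}(\Omega^1_M)^{\otimes i}\otimes E$, and the same formalism applied to $E\otimes W$ relative to the $W$-factor yields \eqref{eq:jetconn}. I would set this up by induction on $k$, the base case $k\,=\,0$ being the tautological identity $E\otimes W\,=\,E\otimes W$, and the inductive step checking that $\nabla$ makes the two exact sequences isomorphic in a way compatible with the forgetful maps.

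The two displayed formulas of the lemma then follow by direct substitution. In the range $n\,<\,0$ or $n\,\geq\,k$, Corollary~\ref{cor0} gives $J^k({\mathbf L}^n)\,=\,{\mathbf L}^{n-k}\otimes{\mathcal V}_k$, so \eqref{eq:jetconn} yields $J^k(E\otimes{\mathbf L}^n)\,=\,E\otimes{\mathbf L}^{n-k}\otimes{\mathcal V}_k$; in the range $k\,>\,n\,\geq\,0$, substituting the direct sum decomposition gives the second formula, with $E$ distributing over the direct sum. The main obstacle I anticipate is not the formal substitution but verifying that the isomorphism \eqref{eq:jetconn} is genuinely \emph{canonical} and compatible with the forgetful structure—i.e.\ that it depends only on $\nabla$ and not on auxiliary choices, and that it intertwines the jet-bundle filtrations. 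Because $M$ is a curve, the symbol maps involve only powers of the single line bundle $\Omega^1_M\,=\,{\mathbf L}^{-2}$, which keeps the bookkeeping manageable; the delicate point is confirming that the iterated covariant derivative descends to a well-defined bundle map on jets rather than merely a differential-operator-level statement, which I would handle by a local computation in a coordinate chart adapted to the connection.
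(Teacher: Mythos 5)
Your top-level strategy is the same as the paper's: establish the canonical isomorphism $J^k(E\otimes W)\,\cong\, E\otimes J^k(W)$ for a bundle $E$ with a holomorphic connection, then substitute $W\,=\,{\mathbf L}^n$ and invoke Corollary \ref{cor1}. That reduction is sound, and the isomorphism is true (it is exactly \eqref{jt2} in the paper). The genuine gap is in how you propose to prove it. Your key intermediate claim --- that a holomorphic connection on $E$ yields a canonical isomorphism $J^k(E)\,\cong\,\bigoplus_{i=0}^{k}K_M^{\otimes i}\otimes E$ --- is false, and it contradicts the paper's own computations. Take $E\,=\,{\mathcal O}_X$ with its trivial connection on $X\,=\,{\mathbb P}({\mathbb V})$ and $k\,=\,2$. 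Theorem \ref{thm1} (case $k\,>\,n\,\geq\,0$ with $n\,=\,0$) gives $J^2({\mathcal O}_X)\,\cong\,{\mathbb V}_0\oplus(L^{-3}\otimes{\mathbb V}_1)\,\cong\,{\mathcal O}_X\oplus L^{-3}\oplus L^{-3}$, whereas your formula would give ${\mathcal O}_X\oplus K_X\oplus K_X^{\otimes 2}\,\cong\,{\mathcal O}_X\oplus L^{-2}\oplus L^{-4}$. By the uniqueness part of Grothendieck's theorem these two bundles are not isomorphic. The same failure persists on a compact Riemann surface with a projective structure, by Corollary \ref{cor1}.

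The reason the mechanism breaks is that iterating $\nabla$ beyond first order requires a holomorphic connection on $K_M$ itself, i.e.\ a holomorphic affine connection on $M$: there is no holomorphic ``Levi-Civita'' connection, and since $\deg TM\,=\,2-2g\,\neq\,0$ for $g\,\neq\,1$, the bundle $TM$ admits no holomorphic connection at all on a compact Riemann surface of genus $\neq 1$; a projective structure does not supply one either. Relatedly, your induction on $k$ compares two short exact sequences whose subbundles and quotient bundles agree, but that alone never produces an isomorphism of the middle terms: one must construct an actual map, and the map you implicitly rely on (iterated covariant derivative) does not exist. The repair is the paper's argument for \eqref{jt2}, which uses only first-order data plus flatness: on a curve every holomorphic connection is flat (there are no nonzero holomorphic $2$-forms), so $E$ admits local flat frames; equivalently, parallel transport for $f^*_2 D_E$ identifies $f^*_2 E$ with $f^*_1 E$ on a neighborhood of the diagonal $\Delta_M$. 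Since transitions between flat frames are locally constant matrices, they commute with taking jets, so the local identifications $J^k(E\otimes W)\,\cong\,E\otimes J^k(W)$ glue; globally this is the projection-formula computation in \eqref{jt2}, after which Corollary \ref{cor1} finishes the proof exactly as you indicate. Note that the true statement only gives $J^k(E)\,\cong\,E\otimes J^k({\mathcal O}_M)$, and $J^k({\mathcal O}_M)$ is itself not isomorphic to $\bigoplus_{i=0}^{k}K_M^{\otimes i}$.
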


Let $E$ be a holomorphic vector bundle on $M$ equipped with a holomorphic connection, as in
Lemma \ref{lem0}. Given a holomorphic vector bundle $F$ on $M$, denote by ${\rm 
Diff}^k_M(E\otimes {\mathbf L}^n,\, F)$ the sheaf of holomorphic differential operators of 
order $k$ from $E\otimes {\mathbf L}^n$ to $F$.

The associated symbol homomorphism
$$
{\rm Diff}^k_M(E\otimes {\mathbf L}^n,\, F)\, \longrightarrow\,
{\rm Hom}(E\otimes {\mathbf L}^n,\, F)\otimes (TM)^{\otimes k}
$$
is described in Corollary \ref{cor3} and Corollary \ref{cor6}.

Fix integers $k$, $n$ and $l$ such that at least one of the following two conditions is valid:
\begin{enumerate}
\item $n \,<\, 0$, or

\item $n,\, l \,\geq\, k$.
\end{enumerate}
Also fix a section
\begin{equation}\label{t}
\theta_0\, \in\, H^0(M,\, \text{End}(E)\otimes{\mathbf L}^l)\, .
\end{equation}
Using $\theta_0$, and a projective structure $P$ on $M$, we construct a holomorphic
differential operator
$$
{\mathcal T}_\theta \, \in\, H^0(M,\, \text{Diff}^k(E\otimes{\mathbf L}^n,\,
E\otimes{\mathbf L}^{l+n-2k}))\, .
$$
The following theorem shows that ${\mathcal T}_\theta$ is a lift of the symbol $\theta_0$
to a holomorphic differential operator (see Theorem \ref{thm2}):

\begin{theorem}\label{thm01}
The symbol of the above holomorphic differential operator ${\mathcal T}_\theta$
is the section $\theta_0$ in \eqref{t}.
\end{theorem}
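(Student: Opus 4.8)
The plan is to construct $\mathcal{T}_\theta$ explicitly and then verify its symbol by a local computation that is forced to be consistent by the $\mathrm{SL}(\mathbb{V})$--equivariance established earlier. First I would recall the definition of the symbol map: a $k$-th order differential operator $D\colon E\otimes\mathbf{L}^n\to E\otimes\mathbf{L}^{l+n-2k}$ factors through the jet bundle as $D = \widetilde{D}\circ j^k$ for a bundle homomorphism $\widetilde{D}\colon J^k(E\otimes\mathbf{L}^n)\to E\otimes\mathbf{L}^{l+n-2k}$, and the symbol is obtained by restricting $\widetilde{D}$ to the top-order piece $\mathrm{Sym}^k(T^*M)\otimes(E\otimes\mathbf{L}^n)\subset J^k(E\otimes\mathbf{L}^n)$. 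Since $T^*M = \mathbf{L}^{-2}$ here, this top piece is $E\otimes\mathbf{L}^{n-2k}$, and the codomain $E\otimes\mathbf{L}^{l+n-2k} = (E\otimes\mathbf{L}^{n-2k})\otimes\mathbf{L}^l$, so a symbol is exactly a section of $\mathrm{End}(E)\otimes\mathbf{L}^l$, matching the space in which $\theta_0$ lives. The whole point is that the decompositions of Lemma~\ref{lem0} let me read off this top-order summand canonically.

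Next I would give the construction of $\mathcal{T}_\theta$ under the two standing hypotheses. In both admissible cases Lemma~\ref{lem0} provides a canonical splitting of $J^k(E\otimes\mathbf{L}^n)$ in which the summand $E\otimes\mathbf{L}^{n-k}\otimes\mathcal{V}_k$ (respectively its analog) contains the top symbol piece $E\otimes\mathbf{L}^{n-2k}$ as the sub-line-bundle coming from $\mathbf{L}^{-k}\hookrightarrow\mathcal{V}_k$ induced by the extension $0\to\mathbf{L}^*\to\mathcal{V}_1\to\mathbf{L}\to 0$. The idea is to use $\theta_0$, viewed as a map $E\otimes\mathbf{L}^{n-2k}\to E\otimes\mathbf{L}^{l+n-2k}$, together with the projection from the jet bundle onto this top summand, and to build $\widetilde{\mathcal{T}_\theta}$ as the composite of the canonical projection $J^k(E\otimes\mathbf{L}^n)\to E\otimes\mathbf{L}^{n-2k}$ with $\theta_0$. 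Setting $\mathcal{T}_\theta = \widetilde{\mathcal{T}_\theta}\circ j^k$ then produces the desired operator, with the holomorphic connection on $E$ entering precisely to make the jet decomposition (and hence the projection) available, exactly as in Lemma~\ref{lem0}.

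Having defined $\mathcal{T}_\theta$ this way, the symbol computation becomes almost tautological: the symbol of $\widetilde{\mathcal{T}_\theta}\circ j^k$ is by definition the restriction of $\widetilde{\mathcal{T}_\theta}$ to the top-order summand, and since I built $\widetilde{\mathcal{T}_\theta}$ as $\theta_0$ composed with the canonical projection onto that summand, the restriction returns $\theta_0$. The remaining content is to check that the canonical projection restricts to the \emph{identity} on the top summand $E\otimes\mathbf{L}^{n-2k}$ under the Lemma~\ref{lem0} identification, i.e.\ that the inclusion of the symbol subbundle into $J^k$ and the projection used to define $\mathcal{T}_\theta$ are mutually inverse on that piece. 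This is where I would do the local check in a projective chart, writing the jet of a local section in the standard flat coordinate where the connection and the projective structure are trivial, and confirming that the coefficient of the $k$-th derivative maps to $\theta_0$ with unit normalization.

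The main obstacle I anticipate is the bookkeeping of the normalization constant arising from the symbol map's conventional factor (the $k$-th derivative versus $\mathrm{Sym}^k$ of the first, which typically introduces a $k!$ or binomial factor) and, more subtly, pinning down the precise identification of $T^*M$ with $\mathbf{L}^{-2}$ and of the top sub-line-bundle of $\mathcal{V}_k$ so that the composite is genuinely $\theta_0$ and not a nonzero scalar multiple of it. I would resolve this by working in a single projective coordinate chart, where $\mathbf{L}$ trivializes compatibly with the coordinate, using the explicit $\mathrm{SL}(\mathbb{V})$--equivariant isomorphisms of Theorem~\ref{thm0} at the model $X = \mathbb{P}(\mathbb{V})$ to fix all constants once and for all; the equivariance then guarantees that the chart computation glues to the global statement on $M$, since the transition functions lie in $\mathrm{SL}(\mathbb{V})$ by the defining property of the projective structure $P$.
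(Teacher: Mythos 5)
Your construction of $\mathcal{T}_\theta$ is not the one the theorem is about, and, more seriously, it rests on an object that does not exist: a ``canonical projection'' $J^k(E\otimes\mathbf{L}^n)\longrightarrow E\otimes\mathbf{L}^{n-2k}$ restricting to the identity on the symbol subbundle $(E\otimes\mathbf{L}^n)\otimes K_M^{\otimes k}\,\subset\, J^k(E\otimes\mathbf{L}^n)$. Under the identification of Lemma \ref{lem2}, that subbundle is $E\otimes\mathbf{L}^{n-k}\otimes\mathcal{L}^{\otimes k}\,\subset\, E\otimes\mathbf{L}^{n-k}\otimes\mathcal{V}_k$ (this is Corollary \ref{cor2}), so your projection would exhibit $\mathcal{L}^{\otimes k}$ as a holomorphic direct summand of $\mathcal{V}_k$. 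For $g\,\geq\,2$ this is impossible: $\mathcal{V}_k$ carries the holomorphic connection $D_k$ of \eqref{D2}, so by the Atiyah--Weil criterion \cite{At2} every indecomposable direct summand of $\mathcal{V}_k$ has degree zero, whereas $\deg \mathcal{L}^{\otimes k}\,=\,k(g-1)\,>\,0$. Equivalently, the jet sequence \eqref{e2} has no holomorphic splitting; if it did, your recipe (compose the projection with any bundle map) would lift \emph{every} symbol, with no condition on $l$ and with arbitrary target $F$ --- contradicting the paper's closing remark that the symbol $1$ of a would-be holomorphic connection on $\mathcal{L}^{m}$, $m\,\neq\,0$, $g\,\geq\,2$, admits no lift, since $\deg\mathcal{L}^m\,\neq\,0$. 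The fact that your argument never invokes the hypothesis $l\,\geq\,k$ (or $l\,<\,0$) is a symptom of this: any correct construction must use it.

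The actual content of the theorem is precisely how to circumvent the non-existence of such a projection. The paper prolongs $\theta_0$ to its full $k$-jet $\theta\,\in\,H^0(M,\,\text{End}(E)\otimes\mathbf{L}^{l-k}\otimes\mathcal{V}_k)$ --- this is where the induced connection on $\text{End}(E)$ and the hypothesis on $l$ enter, via Lemma \ref{lem2} applied to $\text{End}(E)\otimes\mathbf{L}^l$ --- and then pairs the $\mathcal{V}_k$-factor of $J^k(E\otimes\mathbf{L}^n)$ against the $\mathcal{V}_k$-factor of $\theta$ using the Clebsch--Gordan projection $p_0\,:\,\mathcal{V}_k\otimes\mathcal{V}_k\,\longrightarrow\,\mathcal{V}_0\,=\,\mathcal{O}_M$ of Proposition \ref{prop1}. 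With this construction the symbol computation is \emph{not} tautological: one must show that $p_0$ restricted to $\mathcal{L}^{\otimes k}\otimes\mathcal{V}_k$ annihilates $\mathcal{L}^{\otimes k}\otimes(\mathcal{L}\cdot\mathcal{V}_{k-1})$ and induces on $\mathcal{L}^{\otimes k}\otimes\mathbf{L}^{\otimes k}$ the canonical duality pairing normalized by $\omega$ in \eqref{e5} (in the model, $v^{\otimes k}\otimes u^{\otimes k}\,\longmapsto\,\lambda^k$ where $v\wedge u\,=\,\lambda\,\omega$). This identity is what guarantees that the restriction of $\mathcal{T}_\theta$ to the symbol subbundle sees only the $0$-th order part of the jet $\theta$, namely $\theta_0$, with unit normalization; Corollary \ref{cor3} then identifies that restriction with the symbol. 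Your chart-plus-$\mathrm{SL}(\mathbb{V})$-equivariance strategy in the final paragraph would be a reasonable way to verify this pairing identity, but only after the operator has been correctly constructed; as written, your proof computes the symbol of an operator that cannot be defined.
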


\section{Preliminaries}\label{se2}

\subsection{Jet bundles and differential operators}

Let $X$ be a connected Riemann surface.
The holomorphic cotangent bundle of $X$ will be denoted by $K_X$.
Consider the Cartesian product $X\times X$. For $i\,=\, 1,\, 2$, let
$$
p_i\, :\, X\times X\, \longrightarrow\, X
$$
be the natural projection to the $i$--th factor. Let
\begin{equation}\label{e-2}
\Delta\, :=\, \{(x,\, x)\, \mid\, x\, \in\, X\}\, \subset\, X\times X
\end{equation}
be the diagonal divisor. At times we shall identify $\Delta$ with $X$
using the mapping $x\, \longmapsto\, (x,\, x)$, where $x\, \in\, X$. For any nonnegative integer
$k$, consider the natural short exact sequence of coherent analytic sheaves on $X\times X$
\begin{equation}\label{e0n}
0\, \longrightarrow\, {\mathcal O}_{X\times X} (-(k+1)\Delta)\, \longrightarrow\,
{\mathcal O}_{X\times X} (-k\Delta)
\end{equation}
$$
\longrightarrow\, {\mathcal O}_{X\times X} (-k\Delta)/{\mathcal O}_{X\times X} (-(k+1)\Delta)
\, \longrightarrow\, 0\, .
$$
Using the Poincar\'e adjunction formula, we have
\begin{equation}\label{e0}
{\mathcal O}_{X\times X} (-k\Delta)/{\mathcal O}_{X\times X} (-(k+1)\Delta)
\,=\, K^{\otimes k}_X\, ;
\end{equation}
here $X$ is identified with $\Delta$ as explained above. Let $z$ be a holomorphic coordinate
function on an open subset $U\, \subset\, X$. The isomorphism in \eqref{e0} over 
$(U\times U)\bigcap\Delta$ sends
the section $(z\circ p_2 -z\circ p_1)^k$ of ${\mathcal O}_{X\times X} (-k\Delta)/{\mathcal
O}_{X\times X} (-(k+1)\Delta)$ over $(U\times U)\bigcap\Delta$ to the section $(dz)^{\otimes k}$ of
$K^{\otimes k}_X\vert_U$ (after identifying $U$ with $(U\times U)\bigcap\Delta$ in the above fashion).

Let $W$ be a holomorphic vector bundle on $X$ of rank $r_W$.
For any nonnegative integer $k$, consider the coherent analytic subsheaf
$$
(p^*_2 W)\otimes {\mathcal O}_{X\times X} (-(k+1)\Delta)\, \subset\,
p^*_2 W\, .
$$
The $k$--th order jet bundle $J^k(W)$ is defined to be
the direct image
\begin{equation}\label{e1}
J^k(W)\, :=\, p_{1*}((p^*_2 W)/((p^*_2 W)\otimes {\mathcal O}_{X\times X}
(-(k+1)\Delta)))\, \longrightarrow\, X\, ,
\end{equation}
which is a holomorphic vector bundle on $X$ of rank $(k+1)r_W$.

For any $k\, \geq\, 1$, using the natural inclusions of coherent analytic sheaves on $X\times X$
$$
(p^*_2 W)\otimes {\mathcal O}_{X\times X} (-(k+1)\Delta)\, \subset\,
(p^*_2 W)\otimes {\mathcal O}_{X\times X} (-k\Delta)\, \subset\, p^*_2 W
$$
together with the isomorphism in \eqref{e0}, we obtain a short exact sequence of holomorphic vector
bundles on $X$
\begin{equation}\label{e2}
0\, \longrightarrow\, W\otimes K^{\otimes k}_X \, \stackrel{\iota}{\longrightarrow}\,J^k(W)
\, \stackrel{\phi_k}{\longrightarrow}\,J^{k-1}(W) \, \longrightarrow\, 0\, .
\end{equation}

Let $V$ be a holomorphic vector bundle on $X$. Let $\text{Diff}^k(W, V)$ denote
the sheaf of holomorphic differential operators of order $k$ from $W$ to $V$.
It is the sheaf of holomorphic sections of
\begin{equation}\label{dod}
\text{Diff}^k(W, V)\, :=\, \text{Hom}(J^k(W),\, V)\, =\, V\otimes J^k(W)^*\, .
\end{equation}
Consider the homomorphism
\begin{equation}\label{e2p}
\text{Diff}^k(W, V)\,=\, V\otimes J^k(W)^*\, \stackrel{\text{Id}_V\otimes \iota^*}{\longrightarrow}\,
V\otimes (W\otimes K^{\otimes k}_X)^*\,=\, (TX)^{\otimes k}\otimes \text{Hom}(W,\, V)\, ,
\end{equation}
where $\iota^*$ is the dual of the homomorphism $\iota$ in \eqref{e2}. This homomorphism
$\text{Id}_V\otimes \iota^*$ in \eqref{e2p} is known as the \textit{symbol map}on differential operators.

\subsection{Line bundles on the projective line}

Let $\mathbb V$ be a complex vector space of dimension two. Now set
$$
X\, :=\, {\mathbb P}({\mathbb V})
$$
to be the compact Riemann surface of genus zero parametrizing all the
complex lines in $\mathbb V$. Let
\begin{equation}\label{l}
L\, :=\, {\mathcal O}_X(1) \,=\, {\mathcal O}_{{\mathbb P}({\mathbb V})}(1)
\, \longrightarrow\, X
\end{equation}
be the tautological holomorphic line bundle of degree one whose fiber over the
point of ${\mathbb P}({\mathbb V})$ corresponding to a line $\xi\, \subset\, {\mathbb V}$ is the quotient
line ${\mathbb V}/\xi$. For any integer $n$, by $L^n$ we shall denote
the holomorphic line bundle
\begin{itemize}
\item $L^{\otimes n}$, if $n\, >\, 0$,

\item $(L^{\otimes -n})^*$, if $n\, <\, 0$, and

\item ${\mathcal O}_X$ (the trivial holomorphic line bundle), if $n\, =\, 0$.
\end{itemize}

A theorem of Grothendieck asserts that any holomorphic vector bundle $F$ on $X$ of
rank $r$ is of the form
$$
F\, =\, \bigoplus_{i=1}^r L^{a_i}\, ,
$$
where $a_i\, \in\, \mathbb Z$ \cite[p.~122, Th\'eor\`eme 1.1]{Gr}; moreover, the above 
integers $\{a_i\}$ are uniquely determined, up to a permutation, by $F$ \cite[p.~122, 
Th\'eor\`eme 1.1]{Gr}, \cite[p.~315, Theorem 2]{At}. Recall that the holomorphic cotangent
bundle $K_X$ is holomorphically isomorphic to $L^{-2}$.

Let
\begin{equation}\label{l2}
{\mathbb L}\, \longrightarrow\, X
\end{equation}
be the tautological holomorphic line bundle of degree $-1$ whose fiber over the
point of ${\mathbb P}({\mathbb V})$ corresponding to a line $\xi\, \subset\, {\mathbb V}$ is
$\xi$ itself. This $\mathbb L$ is holomorphically isomorphic to $L^{-1}$. We note that there is no
natural isomorphism between $L^{-1}$ and $\mathbb L$.

Let $\text{SL}({\mathbb V})$ be the complex Lie group of
dimension three consisting of all linear automorphisms of 
$\mathbb V$ that act trivially on the determinant line $\bigwedge^2 \mathbb V$. This group 
$\text{SL} ({\mathbb V})$ acts on ${\mathbb P}(\mathbb V)$ as follows: the action of any $A\, 
\in\, \text{SL}({\mathbb V})$ sends a line $\xi\, \subset\, \mathbb V$ to the line $A(\xi)$. 
The corresponding action homomorphism $$\text{SL}({\mathbb V})\, \longrightarrow\, 
\text{Aut}(X)$$ is holomorphic, and its kernel is $\pm \text{Id}_{\mathbb V}$. The action of 
$\text{SL}({\mathbb V})$ on $X$ has a tautological lift to an action of $\text{SL}({\mathbb V})$ on 
$K^{\otimes m}_X$ for every integer $m$.

The above action of $\text{SL}({\mathbb V})$ on ${\mathbb P}(\mathbb V)$ clearly lifts to
the line bundle $L$. Indeed, the action of any $A\, \in\,
\text{SL}({\mathbb V})$ on $\mathbb V$ produces an isomorphism
$$
{\mathbb V}/\xi\, \stackrel{\sim}{\longrightarrow}\, {\mathbb V}/A(\xi)
$$
for every line $\xi\,\subset\,\mathbb V$. The resulting action of $\text{SL}({\mathbb V})$ on $L$
produces an action of $\text{SL}({\mathbb V})$ on the tensor product $L^n$ for every $n$.

The diagonal action of $\text{SL}({\mathbb V})$ on $X\times X$ preserves the divisor $\Delta$ 
defined in \eqref{e-2}. This action of $\text{SL}({\mathbb V})$ on $X\times X$, and the action of 
$\text{SL}({\mathbb V})$ on $L^n$, together produce an action of $\text{SL}({\mathbb V})$ on the
jet bundle $J^k(L^n)$ defined in \eqref{e1}. The homomorphism $\phi_k$ in \eqref{e2} is 
in fact $\text{SL}({\mathbb V})$--equivariant.

\section{Description of jet bundles on the projective line}

Consider the short exact sequence of coherent analytic sheaves on $X\times X\,=\,
{\mathbb P}({\mathbb V})\times {\mathbb P}({\mathbb V})$
\begin{equation}\label{e3}
0\, \longrightarrow\, (p^*_2 L^n)\otimes{\mathcal O}_{X\times X} (-(k+1)\Delta)
\, \longrightarrow\, p^*_2 L^n \, \longrightarrow\, (p^*_2 L^n)\vert_{(k+1)\Delta}
\, \longrightarrow\, 0\, ,
\end{equation}
where $L$ is defined in \eqref{l}, and
$p_i$ as before is the projection $X\times X\, \longrightarrow\, X$ to the $i$--th factor.
{}From \eqref{e1} we know that the jet bundle $J^k(L^n)$ is the direct image of $(p^*_2 L^n)\vert_{(k+1)\Delta}$
to $X$ under the projection $p_1$. We shall investigate $J^k(L^n)$ using the long exact sequence
of direct images for the short exact sequence of sheaves in \eqref{e3}.

Let
$$
0\, \longrightarrow\, R^0p_{1*}((p^*_2 L^n)\otimes{\mathcal O}_{X\times X} (-(k+1)\Delta))
\, \longrightarrow\, R^0p_{1*} p^*_2 L^n \, \longrightarrow\, R^0p_{1*}((p^*_2 L^n)\vert_{(k+1)\Delta})
$$
\begin{equation}\label{e4}
=:\, J^k(L^n)\, \longrightarrow\, R^1p_{1*}((p^*_2 L^n)\otimes{\mathcal O}_{X\times X} (-(k+1)\Delta))
\, \longrightarrow\, R^1p_{1*} p^*_2 L^n \, \longrightarrow\, 0
\end{equation}
be the long exact sequence of direct images, under $p_1$, associated to the short exact sequence in 
\eqref{e3}. Note that $$R^1p_{1*}((p^*_2 L^n)\vert_{(k+1)\Delta})\,=\, 0\, ,$$ because the 
support of $(p^*_2 L^n)\vert_{(k+1)\Delta}$ is finite over $X$ (for the projection $p_1$).

Since the diagonal action of $\text{SL}({\mathbb V})$ on $X\times X$ preserves the divisor 
$\Delta$, each sheaf in \eqref{e4} is equipped with an action of $\text{SL}({\mathbb V})$ that 
lifts the action of $\text{SL}({\mathbb V})$ on $X$. Moreover, all the homomorphisms in 
the exact sequence \eqref{e4} are in fact $\text{SL}({\mathbb V})$--equivariant.

The direct image $R^ip_{1*} p^*_2 L^n$ in \eqref{e4} is evidently the trivial holomorphic vector bundle
\begin{equation}\label{ci}
R^ip_{1*} p^*_2 L^n\,=\, X\times H^i(X,\, L^n)
\end{equation}
over $X$ with fiber $H^i(X,\, L^n)$ for $i\,=\, 0,\, 1$.

Next, to understand $R^ip_{1*}((p^*_2 L^n)\otimes{\mathcal O}_{X\times X} (-(k+1)\Delta))$
in \eqref{e4}, we first
note that the holomorphic line bundle ${\mathcal O}_{X\times X} (\Delta)$ is holomorphically
isomorphic to $(p^*_1L)\otimes (p^*_2L)$. To construct an explicit isomorphism between
${\mathcal O}_{X\times X} (\Delta)$ and $(p^*_1L)\otimes (p^*_2L)$, fix a nonzero element
\begin{equation}\label{e5}
0\, \not=\, \omega\, \in\, \bigwedge\nolimits^2 {\mathbb V}\, .
\end{equation}
We have $H^0(X,\, L)\,=\, \mathbb V$. So
$$
H^0(X\times X,\, (p^*_1L)\otimes (p^*_2L))\,=\, H^0(X,\, L)\otimes H^0(X,\, L)\,=\,
{\mathbb V}\otimes {\mathbb V}\, ,
$$
and hence $\omega\, \in\, \bigwedge\nolimits^2 {\mathbb V}\, \subset\, {\mathbb V}\otimes {\mathbb V}$
in \eqref{e5} defines a holomorphic section
\begin{equation}\label{som}
\widehat{\omega}\, \in\, H^0(X\times X,\, (p^*_1L)\otimes (p^*_2L))\, .
\end{equation}
This section $\widehat{\omega}$ vanishes exactly on the diagonal $\Delta\, \subset\, X\times
X$; this is because $\omega\, \in\, \bigwedge\nolimits^2 {\mathbb V}$.
Consequently, there is a unique holomorphic isomorphism
\begin{equation}\label{P}
\Phi\, :\, (p^*_1L)\otimes (p^*_2L)\, \stackrel{\sim}{\longrightarrow}\, {\mathcal O}_{X\times X} (\Delta)
\end{equation}
such that the section $\Phi(\widehat{\omega})$, where $\widehat\omega$
is constructed in \eqref{som}, coincides with the holomorphic section of 
${\mathcal O}_{X\times X} (\Delta)$ given by the constant function $1$ on $X\times X$. 

The isomorphism $\Phi$ in \eqref{P} depends on $\omega$. It should be mentioned that
there is no natural isomorphism between ${\mathcal O}_{X\times X} (\Delta)$ and $(p^*_1L)\otimes (p^*_2L)$.

Observe that $\omega$ in \eqref{e5}
defines a symplectic structure on $\mathbb V$, in particular, $\omega$ produces
an isomorphism
\begin{equation}\label{l3i}
{\mathbb V}^*\,\stackrel{\sim}{\longrightarrow}\, {\mathbb V}\, .
\end{equation}
The action of $\text{SL}({\mathbb V})$ on $\bigwedge\nolimits^2 {\mathbb V}$, given by the action of
$\text{SL}({\mathbb V})$ on $\mathbb V$, is the trivial action, and hence this action
fixes the element $\omega$ in \eqref{e5}. We observe that $\omega$ produces a holomorphic isomorphism
\begin{equation}\label{l3}
\Psi\, :\, L^{-1}\, \longrightarrow\, {\mathbb L}\, ,
\end{equation}
where $\mathbb L$ was constructed in \eqref{l2}. Indeed, from the short exact sequence of
holomorphic vector bundles
\begin{equation}\label{exl}
0\, \longrightarrow\, {\mathbb L}\, \longrightarrow\, X\times{\mathbb V} \, \longrightarrow\, L
\, \longrightarrow\,0
\end{equation}
we conclude that ${\mathbb L}\otimes L\,=\, X\times \bigwedge^2 {\mathbb V}$. Therefore, the 
element $\omega\,\in\, \bigwedge^2 {\mathbb V}$ gives a nowhere vanishing holomorphic section 
of ${\mathbb L}\otimes L$. This section produces the isomorphism $\Psi$ in \eqref{l3}. Observe 
that the diagonal action of ${\rm SL}({\mathbb V})$ on $X\times{\mathbb V}$ preserves the subbundle 
$\mathbb L$ in \eqref{exl}; more precisely, the homomorphisms in the exact sequence \eqref{exl} are ${\rm 
SL}({\mathbb V})$--equivariant. The isomorphism $\Psi$ in \eqref{l3} is evidently ${\rm 
SL}({\mathbb V})$--equivariant. It should be mentioned that $\Psi$ depends on $\omega$.

Combining $\Phi$ in \eqref{P} with $\Psi$ in \eqref{l3}, we have
\begin{equation}\label{e6}
(p^*_1{\mathbb L})\otimes (p^*_2{\mathbb L})\,=\,
(p^*_1L^{-1})\otimes (p^*_2L^{-1})\,=\, (p^*_1{\mathbb L})\otimes (p^*_2L^{-1})
\,=\, {\mathcal O}_{X\times X} (-\Delta)\, .
\end{equation}

Using the identification of $X$ with $\Delta$, the Poincar\'e adjunction formula gives that
${\mathcal O}_{X\times X} (\Delta)\vert_\Delta\,=\, TX$, where $TX$ is the holomorphic tangent
bundle of $X$; this isomorphism ${\mathcal O}_{X\times X} (\Delta)\vert_\Delta\,=\, TX$
is the dual of the isomorphism in \eqref{e0} for $k\,=\, 1$.
Therefore, restricting the isomorphisms in \eqref{e6} to $\Delta\, \subset\, X\times X$,
we have a holomorphic isomorphism
\begin{equation}\label{e-3}
\Phi_0\, :\, L^2\,=\, ({\mathbb L}^*)^{\otimes 2}\, \stackrel{\sim}{\longrightarrow}\, TX\, .
\end{equation}

Using the isomorphisms in \eqref{e6}, we have
$$
(p^*_2 L^n)\otimes{\mathcal O}_{X\times X} (-(k+1)\Delta)\,=\,
(p^*_1L^{-k-1})\otimes (p^*_2 L^{n-k-1})\,=\, (p^*_1{\mathbb L}^{k+1})\otimes (p^*_2 L^{n-k-1})\, .
$$
Hence by the projection formula, \cite[p.~426, A4]{Ha},
\begin{equation}\label{e7}
R^ip_{1*}((p^*_2 L^n)\otimes{\mathcal O}_{X\times X} (-(k+1)\Delta))\,=\,
R^ip_{1*}((p^*_1{\mathbb L}^{k+1})\otimes (p^*_2 L^{n-k-1}))
\end{equation}
$$
=\, {\mathbb L}^{k+1}\otimes R^ip_{1*}(p^*_2 L^{n-k-1})
$$
for $i\,=\,0,\, 1$.

We shall break all possible $(n,\, k)$ into three cases.

\subsection{The case of $n\, <\, 0$}

In \eqref{e4}, we have $R^0p_{1*} p^*_2 L^n\,=\, 0$, because $n\, <\, 0$. Consequently,
the short exact sequence in \eqref{e4} becomes
\begin{equation}\label{e4a}
0\, \longrightarrow\, J^k(L^n)\, \longrightarrow\, R^1p_{1*}((p^*_2 L^n)\otimes{\mathcal
O}_{X\times X} (-(k+1)\Delta)) \, \longrightarrow\, R^1p_{1*} p^*_2 L^n \, \longrightarrow\, 0\, .
\end{equation}
Now using \eqref{e7}, the exact sequence in \eqref{e4a} becomes the short exact sequence
\begin{equation}\label{e8}
0 \, \longrightarrow\, J^k(L^n)\, \longrightarrow\,
{\mathbb L}^{k+1}\otimes R^1p_{1*}(p^*_2 L^{n-k-1})
\, \stackrel{\gamma_1}{\longrightarrow}\, R^1p_{1*} p^*_2 L^n \, \longrightarrow\, 0\, .
\end{equation}

Invoking Serre duality and using the isomorphism $\Phi_0$ in \eqref{e-3},
$$
H^1(X,\, L^n)\,=\, H^0(X,\, L^{-n-2})^*\, =\, \text{Sym}^{-n-2}({\mathbb V})^*\, ,
$$
where $\text{Sym}^{-n-2}({\mathbb V})$ denotes:
\begin{itemize}
\item the $(-n-2)$--th symmetric power of $\mathbb V$, if $-n-2\, >\, 0$,

\item $\mathbb C$, if $-n\,=\,2$, and

\item $0$, if $-n\,=\, 1$.
\end{itemize}
Therefore, $R^1p_{1*} p^*_2 L^n$ in \eqref{e8} is the trivial holomorphic vector
bundle
$$
X\times \text{Sym}^{-n-2}({\mathbb V})^*\, \longrightarrow\, X\, 
$$
(see \eqref{ci}).

For notational convenience, the trivial holomorphic vector bundle 
$X\times\text{Sym}^{j}({\mathbb V})$ over $X$ with fiber $\text{Sym}^{j}({\mathbb V})$ will be 
denoted by ${\mathbb V}_j$. The trivial line bundle ${\mathcal O}_X$ will be denoted by 
${\mathbb V}_0$. So we have
\begin{equation}\label{e9}
R^1p_{1*} p^*_2 L^n\,=\, ({\mathbb V}_{-n-2})^* \,=\, {\mathbb V}^*_{-n-2}\, .
\end{equation}

Similarly, we have
\begin{equation}\label{sd}
R^1p_{1*}(p^*_2 L^{n-k-1})\,=\, X\times \text{Sym}^{k-n-1}({\mathbb V})^*
\,=\, {\mathbb V}^*_{k-n-1} \, \longrightarrow\, X\, .
\end{equation}
Hence ${\mathbb L}^{k+1}\otimes R^1p_{1*}(p^*_2 L^{n-k-1})$ in \eqref{e8} has the following description:
\begin{equation}\label{e10}
{\mathbb L}^{k+1}\otimes
R^1p_{1*}(p^*_2 L^{n-k-1})\,=\, {\mathbb L}^{k+1}\otimes {\mathbb V}^*_{k-n-1}\, .
\end{equation}

Using \eqref{e9} and \eqref{e10}, the homomorphism $\gamma_1$ in \eqref{e8} is a homomorphism
\begin{equation}\label{e11}
\gamma_1\, :\, {\mathbb L}^{k+1}\otimes {\mathbb V}^*_{k-n-1} \, \longrightarrow\,
{\mathbb V}^*_{-n-2}\, .
\end{equation}
The homomorphism $\gamma_1$ in \eqref{e11} is simply the contraction of elements of
$\mathbb V$ by ${\mathbb V}^*$ (recall that ${\mathbb L}\,\subset\, X\times
\mathbb V$). More precisely, consider the contraction homomorphism
$$
{\mathbb V}\otimes ({\mathbb V}^*)^{\otimes j}\, \longrightarrow\, ({\mathbb V}^*)^{\otimes (j-1)}\, .
$$
Since ${\mathbb L}\,\subset\, {\mathbb V}_1$ (see \eqref{exl}), this contraction produces a
holomorphic homomorphism of vector bundles
$$
{\mathbb L}\otimes{\mathbb V}^*_j \, \longrightarrow\,{\mathbb V}^*_{j-1}\, .
$$
The homomorphism $\gamma_1$ in \eqref{e11} is $(k+1)$--times iteration of this homomorphism
starting with $j\,=\, k-n-1$.

We shall compute the kernel of the homomorphism $\gamma_1$ in \eqref{e11}.

Taking the dual of the short exact sequence in \eqref{exl}, we have
$$
L^*\, \subset\, X\times {\mathbb V}^*\,=\, {\mathbb V}^*_1\, .
$$
Hence, using the isomorphism $\Psi$ in \eqref{l3}, we have
\begin{equation}\label{e12}
{\mathbb L}\, =\, L^*\, \subset\, {\mathbb V}^*_1\, .
\end{equation}
For the natural duality paring ${\mathbb V}_1\otimes {\mathbb V}^*_1\, \longrightarrow\,
{\mathcal O}_X$,
the annihilator of the subbundle $\mathbb L\, \subset\, {\mathbb V}_1$ in \eqref{exl} is
the subbundle ${\mathbb L}\, \subset\, {\mathbb V}^*_1$ in \eqref{e12}. Therefore, from the above
description of the homomorphism $\gamma_1$ in \eqref{e11} we conclude that the subbundle
$$
{\mathbb L}^{k+1}\otimes ({\mathbb V}^*_{k-n-1}) \, \supset\, {\mathbb L}^{k+1}\otimes
({\mathbb L}^{-n-1}\otimes{\mathbb V}^*_k)\,=\, {\mathbb L}^{k-n}\otimes{\mathbb V}^*_k
$$
is contained in the kernel of the surjective homomorphism $\gamma_1$;
note that ${\mathbb L}^{-n-1}\otimes{\mathbb V}^*_k$
is realized as a subbundle of ${\mathbb V}^*_{k-n-1}$ using the inclusion
${\mathbb L}\, \subset\, {\mathbb V}^*_1$ in \eqref{e12}. From \eqref{e8} we know
that $\gamma_1$ is surjective. Since
$$
\text{rank}({\mathbb L}^{k-n}\otimes{\mathbb V}^*_k)\,=\, k+1\,=\, (k-n)-(-n-1)\,=\,
\text{rank}({\mathbb L}^{k+1}\otimes {\mathbb V}^*_{k-n-1})-\text{rank}({\mathbb V}^*_{-n-2})\, ,
$$
we now conclude that
$$
{\mathbb L}^{k-n}\otimes{\mathbb V}^*_k\,=\, \text{kernel}(\gamma_1)\, .
$$
Therefore, from \eqref{e8} it follows that
\begin{equation}\label{e13}
J^k(L^n)\,=\, \text{kernel}(\gamma_1)\,=\, {\mathbb L}^{k-n}\otimes{\mathbb V}^*_k\, .
\end{equation}
Now using the isomorphism between ${\mathbb V}_k$ and
${\mathbb V}^*_k$ given by the isomorphism in \eqref{l3i}, from \eqref{e13} we deduce that
$$
J^k(L^n)\,=\, {\mathbb L}^{k-n}\otimes {\mathbb V}_k\, .
$$
Moreover, using the isomorphism $\Psi$ in \eqref{l3} we have 
\begin{equation}\label{e16}
J^k(L^n)\,=\, L^{n-k}\otimes {\mathbb V}_k\, .
\end{equation}

Recall that all the homomorphisms in \eqref{e4} are $\text{SL}({\mathbb V})$--equivariant. 
From the construction of the isomorphism in \eqref{e13} between $J^k(L^n)$ and ${\mathbb 
L}^{k-n} \otimes{\mathbb V}^*_k$ it is evident that it is $\text{SL}({\mathbb 
V})$--equivariant. Therefore, the isomorphism in \eqref{e16} is $\text{SL}({\mathbb 
V})$--equivariant.

Fix $k\, \geq\, 1$ an integer. Using \eqref{e16}, the homomorphism $\phi_k$ in \eqref{e2}
is a projection
\begin{equation}\label{a1}
\phi_k\, :\, L^{n-k}\otimes {\mathbb V}_k\,=\, J^k(L^n)
\, \longrightarrow\, J^{k-1}(L^n)\,=\,
L^{n-k+1}\otimes {\mathbb V}_{k-1}\, .
\end{equation}
We shall describe the homomorphism in \eqref{a1}.

Recall that ${\mathbb V}_j\,=\, \text{Sym}^j({\mathbb V}_1)\,=\,
\text{Sym}^j(X\times{\mathbb V})$, and the line bundle $L$
is a quotient of ${\mathbb V}_1$ (see \eqref{exl}). Therefore, we have a natural projection
\begin{equation}\label{vk0}
\varpi^0_k\, :\, {\mathbb V}_k\, \longrightarrow\,L\otimes {\mathbb V}_{k-1}\, .
\end{equation}
To describe this $\varpi^0_k$ explicitly, let $q_{L}\, :\, {\mathbb V}_1\, \longrightarrow\,
L$ be the quotient map in \eqref{exl}. Consider the homomorphism
$$
q_L\otimes \text{Id}_{{\mathbb V}^{\otimes(k-1)}_1}\, :\, {\mathbb V}^{\otimes k}_1\,
\longrightarrow\, L\otimes{\mathbb V}^{\otimes (k-1)}_1\, .
$$
Note that $q_L\otimes\text{Id}_{{\mathbb V}^{\otimes(k-1)}_1}$ sends the subbundle 
$$\text{Sym}^k({\mathbb V}_1)\, \subset\, {\mathbb V}^{\otimes k}_1$$ to the subbundle $L\otimes 
{\mathbb V}_{k-1}\, \subset\, L\otimes{\mathbb V}^{\otimes (k-1)}_1$. The homomorphism 
$\varpi^0_k$ in \eqref{vk0} is this restriction of $q_L\otimes\text{Id}_{{\mathbb 
V}^{\otimes(k-1)}_1}$.

\begin{proposition}\label{prop2}
The homomorphism $\phi_k$ in \eqref{a1} coincides with ${\rm Id}_{L^{n-k}}\otimes\varpi^0_k$, where
$\varpi^0_k$ is the homomorphism in \eqref{vk0}.
\end{proposition}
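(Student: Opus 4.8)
The plan is to exploit the sheaf-theoretic origin of $\phi_k$. By construction, the forgetful map $\phi_k$ in \eqref{e2} is obtained by applying $R^\bullet p_{1*}$ to the inclusion of coherent sheaves on $X\times X$ given by multiplication by the defining section of $\mathcal{O}_{X\times X}(\Delta)$, namely $(p^*_2 L^n)\otimes\mathcal{O}_{X\times X}(-(k+1)\Delta)\hookrightarrow(p^*_2 L^n)\otimes\mathcal{O}_{X\times X}(-k\Delta)$. First I would write down the sequence \eqref{e8} with $k$ replaced by $k-1$, and assemble the inclusion above into a commutative ladder between the two short exact sequences; its right-hand square is the identity on $R^1 p_{1*} p^*_2 L^n=\mathbb{V}^*_{-n-2}$, and the middle vertical arrow is a homomorphism $\beta\colon \mathbb{L}^{k+1}\otimes\mathbb{V}^*_{k-n-1}\longrightarrow \mathbb{L}^{k}\otimes\mathbb{V}^*_{k-n-2}$. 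Since $n<0$ gives $J^k(L^n)=\mathrm{kernel}(\gamma_1)$ and $J^{k-1}(L^n)=\mathrm{kernel}(\gamma'_1)$ by \eqref{e13}, the map $\phi_k$ is precisely the restriction of $\beta$ to these kernels. Hence it suffices to identify $\beta$ and then restrict it to $\mathrm{kernel}(\gamma_1)=\mathbb{L}^{k-n}\otimes\mathbb{V}^*_k$.

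Next I would make $\beta$ explicit. Using the identifications \eqref{e6} and the projection formula \eqref{e7}, the inclusion above becomes multiplication by $\widehat\omega$ from \eqref{som}. Writing $\omega=\sum_i \alpha_i\otimes\beta_i\in\bigwedge^2\mathbb{V}\subset\mathbb{V}\otimes\mathbb{V}$ and recalling $\mathbb{L}^{-1}\cong L$, the map $\beta$ is the sum over $i$ of multiplication by $\alpha_i$ on the $\mathbb{L}$-factor ($\mathbb{L}^{k+1}\to\mathbb{L}^{k}$) tensored with the cohomology multiplication $H^1(X,L^{n-k-1})\to H^1(X,L^{n-k})$ by $\beta_i$; equivalently, $\beta$ is contraction against $\omega$, entangling the two factors.

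Finally I would restrict $\beta$ to the subbundle $\mathbb{L}^{k-n}\otimes\mathbb{V}^*_k\subset\mathbb{L}^{k+1}\otimes\mathbb{V}^*_{k-n-1}$, where the inclusion uses $\mathbb{L}^{-n-1}\otimes\mathbb{V}^*_k\subset\mathbb{V}^*_{k-n-1}$ coming from $\mathbb{L}\subset\mathbb{V}^*_1$ in \eqref{e12}, check that the image lands in $\mathbb{L}^{k-1-n}\otimes\mathbb{V}^*_{k-1}=\mathrm{kernel}(\gamma'_1)$, and identify the resulting map, after applying $\Psi$ from \eqref{l3} and the symplectic duality \eqref{l3i}, with $\mathrm{Id}_{L^{n-k}}\otimes\varpi^0_k$. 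The key sub-claim is that, because $\mathbb{L}\subset\mathbb{V}_1$ and its dual copy $\mathbb{L}\subset\mathbb{V}^*_1$ are mutual annihilators, the contraction-by-$\omega$ appearing in $\beta$ is dual to the quotient $q_L\colon\mathbb{V}_1\to L$, which is exactly the datum defining the symmetrized projection $\varpi^0_k$ in \eqref{vk0}.

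The hard part will be this last identification: keeping the whole web of $\omega$-dependent maps ($\Phi$, $\Psi$, the symplectic isomorphism $\mathbb{V}^*\cong\mathbb{V}$, and Serre duality) mutually consistent so that the restriction of $\beta$ is \emph{literally} $\mathrm{Id}_{L^{n-k}}\otimes\varpi^0_k$ rather than a nonzero multiple of it. As a safeguard for the normalization I would use $\mathrm{SL}(\mathbb{V})$-equivariance: both $\phi_k$ and $\mathrm{Id}_{L^{n-k}}\otimes\varpi^0_k$ are surjective $\mathrm{SL}(\mathbb{V})$-equivariant bundle maps with the same kernel $L^{n-2k}$ (the kernel of $\varpi^0_k$ is $\mathbb{L}^{k}$, which matches $L^n\otimes K_X^{\otimes k}$ from \eqref{e2} via $K_X\cong L^{-2}$), so they agree up to a scalar, which is then pinned to $1$ by the single local computation recorded after \eqref{e0} sending $(z\circ p_2-z\circ p_1)^k$ to $(dz)^{\otimes k}$.
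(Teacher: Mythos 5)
Your proposal follows essentially the same route as the paper's proof: the paper sets up exactly your commutative ladder (its diagram \eqref{e4b}, whose rows are \eqref{e4a} for $k$ and $k-1$), identifies the middle vertical map $\mu$ --- your $\beta$ --- induced by the sheaf inclusion ${\mathcal O}_{X\times X}(-(k+1)\Delta)\hookrightarrow {\mathcal O}_{X\times X}(-k\Delta)$ as contraction by ${\mathbb L}\subset {\mathbb V}$, and reads off the proposition by restricting to the kernels, just as you do. Your closing ${\rm SL}({\mathbb V})$--equivariance/Schur argument pinning the scalar is a reasonable safeguard for the normalization step that the paper leaves implicit.
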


\begin{proof}
Consider the short exact sequence in \eqref{e4a}. We have the commutative diagram of
homomorphisms
\begin{equation}\label{e4b}
\begin{matrix}
&& 0 &&0\\
&& \Big\downarrow &&\big\downarrow\\
&& L^n\otimes K^{\otimes k}_X &= & L^n\otimes K^{\otimes k}_X\\
&& \Big\downarrow &&\big\downarrow\\
0 & \longrightarrow & J^k(L^n) & \longrightarrow & R^1p_{1*}((p^*_2 L^n)\otimes{\mathcal
O}_{X\times X} (-(k+1)\Delta)) & \longrightarrow & R^1p_{1*} p^*_2 L^n & \longrightarrow & 0\\
&&\Big\downarrow &&~\,~\,\Big\downarrow\mu &&\Vert\\
0 & \longrightarrow & J^{k-1}(L^n) & \longrightarrow & R^1p_{1*}((p^*_2 L^n)\otimes{\mathcal
O}_{X\times X} (-k\Delta)) & \longrightarrow & R^1p_{1*} p^*_2 L^n & \longrightarrow & 0\\
&& \Big\downarrow &&\big\downarrow\\
&& 0&& 0
\end{matrix}
\end{equation}
where the bottom exact sequence is the one in \eqref{e4a} with $k$ substituted by $k-1$,
the top exact sequence is the one in \eqref{e4a}, the left-vertical exact sequence is the one in
\eqref{e2} for $W\, =\, L^n$, and the other vertical exact sequence is the long
exact sequence of direct images, under $p_1$, for the short exact sequence
$$
0\, \longrightarrow\, (p^*_2 L^n)\otimes{\mathcal O}_{X\times X} (-(k+1)\Delta)\,
\longrightarrow\, (p^*_2 L^n)\otimes{\mathcal O}_{X\times X} (-k\Delta)
\, \longrightarrow\, (L^n\otimes K^{\otimes k}_X)\vert_\Delta \, \longrightarrow\, 0
$$
of sheaves on $X\times X$ obtained by tensoring with $p^*_2 L^n$ the short exact
sequence given by \eqref{e0n}.

Using \eqref{e7} and \eqref{e10} we have
$$
R^1p_{1*}((p^*_2 L^n)\otimes{\mathcal O}_{X\times X} (-(k+1)\Delta)) \,=\, 
{\mathbb L}^{k+1}\otimes {\mathbb V}^*_{k-n-1}\, .
$$
Replacing $k$ by $k-1$, we have
$$
R^1p_{1*}((p^*_2 L^n)\otimes{\mathcal O}_{X\times X} (-k\Delta)) \,=\, 
{\mathbb L}^{k}\otimes {\mathbb V}^*_{k-n-2}\, .
$$
So the homomorphism $\mu$ in \eqref{e4b} becomes a homomorphism
$$
\mu\,:\, {\mathbb L}^{k+1}\otimes {\mathbb V}^*_{k-n-1}\, \longrightarrow\,
{\mathbb L}^{k}\otimes {\mathbb V}^*_{k-n-2}\, .
$$
This homomorphism is simply the contraction of elements of ${\mathbb V}^*$ by elements
of ${\mathbb L}\, \subset\, \mathbb V$.
The proposition follows from this and the commutative diagram in \eqref{e4b}.
\end{proof}

\subsection{The case of $n\, \geq\, k$}

Note that $n\, \geq\, 0$, because $n\geq\, k$.
Since now we have $n-k-1\, \geq\, -1$, it follows that
$$
H^1(X,\, L^{n-k-1})\,=\, 0\, ,
$$
and hence using \eqref{e7} and \eqref{ci} it is deduced that
$$
R^1p_{1*}((p^*_2 L^n)\otimes{\mathcal O}_{X\times X} (-(k+1)\Delta))\, =\, 0\, .
$$
Consequently, the exact sequence in \eqref{e4} becomes the short exact sequence
\begin{equation}\label{s1}
0\, \longrightarrow\, R^0p_{1*}((p^*_2 L^n)\otimes{\mathcal O}_{X\times X} (-(k+1)\Delta))
\, \longrightarrow\, R^0p_{1*} p^*_2 L^n \, \longrightarrow\, J^k(L^n)\, \longrightarrow\, 0\, .
\end{equation}
Therefore, using \eqref{e7} and \eqref{ci}, the exact sequence in \eqref{s1} becomes the short exact sequence
\begin{equation}\label{e14}
0\, \longrightarrow\, {\mathbb L}^{k+1}\otimes
R^0p_{1*}(p^*_2 L^{n-k-1})\,=\, {\mathbb L}^{k+1}\otimes{\mathbb V}_{n-k-1}
\end{equation}
$$
\stackrel{\gamma_2}{\longrightarrow}\, X\times \text{Sym}^n({\mathbb V})
\,=:\, {\mathbb V}_n \, \longrightarrow\, J^k(L^n)
\, \longrightarrow\, 0\, .
$$

Consider the homomorphism given by the composition of the natural homomorphisms
$$
{\mathbb V}\otimes {\mathbb V}_j\, \longrightarrow\, X\times {\mathbb V}^{\otimes (j+1)}
\, \longrightarrow\, X\times \text{Sym}^{j+1}({\mathbb V})\,=\, {\mathbb V}_{j+1}\, .
$$
Since ${\mathbb L}\, \subset\, \mathbb V$ (see \eqref{exl}), this restricts to a homomorphism
$$
{\mathbb L}\otimes {\mathbb V}_j\, \longrightarrow\, {\mathbb V}_{j+1}\, .
$$
The homomorphism $\gamma_2$ in \eqref{e14} is $(k+1)$--fold iteration of this homomorphism
starting with $j\,=\,n-k-1$.

To compute the cokernel of this homomorphism $\gamma_2$, consider the surjective homomorphism
${\mathbb V}_1 \, \longrightarrow\, L$ in \eqref{exl}. It produces a surjective homomorphism
$$
\theta_2\, :\, {\mathbb V}_n \, \longrightarrow\, L^{n-k}\otimes {\mathbb V}_k\, .
$$
It is straight-forward to check that $\theta_2\circ\gamma_2\,=\, 0$. Consequently, $\theta_2$
produces a surjective homomorphism from $\text{cokernel}(\gamma_2)$ to $L^{n-k}\otimes{\mathbb V}_k$.
On the other hand,
$$
\text{rank}({\mathbb V}_n)-\text{rank}({\mathbb L}^{k+1}\otimes{\mathbb V}_{n-k-1})\,=\,
k+1\,=\, \text{rank}(L^{n-k}\otimes {\mathbb V}_k)\, .
$$
Hence we conclude that the above surjection from $\text{cokernel}(\gamma_2)$
to $L^{n-k}\otimes {\mathbb V}_k$ is also injective.

Therefore, from \eqref{e14} it follows that
\begin{equation}\label{e15}
J^k(L^n)\,=\, L^{n-k}\otimes {\mathbb V}_k\, .
\end{equation}

{}From the construction of the isomorphism in \eqref{e15} it is evident that
this isomorphism is in fact $\text{SL}({\mathbb V})$--equivariant.

Consider $k\, \geq\, 1$ an integer. Using \eqref{e15}, the homomorphism $\phi_k$ in \eqref{e2}
is a projection
\begin{equation}\label{a1p}
\phi_k\, :\, L^{n-k}\otimes {\mathbb V}_k\,=\, J^k(L^n)
\, \longrightarrow\, J^{k-1}(L^n)\,=\, L^{n-k+1}\otimes {\mathbb V}_{k-1}\, .
\end{equation}

\begin{proposition}\label{prop3}
The homomorphism $\phi_k$ in \eqref{a1p} coincides with ${\rm Id}_{L^{n-k}}\otimes
\varpi^0_k$, where $\varpi^0_k$ is the homomorphism in \eqref{vk0}.
\end{proposition}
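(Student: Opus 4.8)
The plan is to imitate the proof of Proposition~\ref{prop2}, with the first direct images $R^1p_{1*}$ (which controlled the case $n<0$) replaced everywhere by the zeroth direct images $R^0p_{1*}$ appropriate to the present range $n\,\geq\, k$. Concretely, I would write down the commutative diagram whose top row is the short exact sequence \eqref{e14} for the given $k$, whose bottom row is \eqref{e14} with $k$ replaced by $k-1$, whose common middle column is the identity of ${\mathbb V}_n\,=\, R^0p_{1*}p^*_2L^n$, and whose left column is obtained by applying $R^0p_{1*}$ to the inclusion of subsheaves $(p^*_2L^n)\otimes{\mathcal O}_{X\times X}(-(k+1)\Delta)\,\subset\, (p^*_2L^n)\otimes{\mathcal O}_{X\times X}(-k\Delta)$; via \eqref{e7} this left column reads ${\mathbb L}^{k+1}\otimes{\mathbb V}_{n-k-1}\,\longrightarrow\, {\mathbb L}^{k}\otimes{\mathbb V}_{n-k}$, and exactly as for the arrow $\mu$ in Proposition~\ref{prop2} it is the natural map induced by ${\mathbb L}\,\subset\, {\mathbb V}_1$, i.e.\ one step of the iteration defining $\gamma_2$. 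The two quotient maps of the horizontal rows, together with the identity middle column, then force the key relation $\phi_k\circ\theta_2\,=\, \theta'_2$, where $\theta_2\,:\, {\mathbb V}_n\,\longrightarrow\, L^{n-k}\otimes{\mathbb V}_k$ is the surjection realizing \eqref{e15} and $\theta'_2\,:\, {\mathbb V}_n\,\longrightarrow\, L^{n-k+1}\otimes{\mathbb V}_{k-1}$ is its analogue for $J^{k-1}(L^n)$.

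Since $\theta_2$ is surjective, the identity $\phi_k\,=\, \text{Id}_{L^{n-k}}\otimes\varpi^0_k$ is equivalent to $(\text{Id}_{L^{n-k}}\otimes\varpi^0_k)\circ\theta_2\,=\, \theta'_2$, and this is where the only genuine content lies. Here I would use that all three homomorphisms $\theta_2$, $\theta'_2$ and $\varpi^0_k$ are induced by the single quotient $q_L\,:\, {\mathbb V}_1\,\longrightarrow\, L$ of \eqref{exl}: the map $\theta_2$ applies $q_L$ to $n-k$ of the tensor slots of ${\mathbb V}^{\otimes n}_1$ and symmetrizes, $\theta'_2$ does the same with $n-k+1$ slots, and $\varpi^0_k$ is by \eqref{vk0} the restriction to $\text{Sym}^k$ of $q_L\otimes\text{Id}_{{\mathbb V}^{\otimes(k-1)}_1}$. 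Lifting everything to the tensor algebra, both $(\text{Id}_{L^{n-k}}\otimes\varpi^0_k)\circ\theta_2$ and $\theta'_2$ are the descent of one and the same homomorphism ${\mathbb V}^{\otimes n}_1\,\longrightarrow\, L^{n-k+1}\otimes{\mathbb V}^{\otimes(k-1)}_1$ applying $q_L$ to $n-k+1$ slots; hence they coincide after symmetrization.

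I expect this last comparison to be the main obstacle, in the sense that it requires checking compatibility of the two routes of applying $q_L$ with the symmetrization projections, a purely multilinear verification that I would carry out on decomposable symmetric tensors. The $\text{SL}({\mathbb V})$--equivariance requires no extra work: every arrow occurring above (the quotient $q_L$, the inclusion ${\mathbb L}\,\subset\, {\mathbb V}_1$, the maps $\gamma_2$, $\theta_2$, $\theta'_2$, $\varpi^0_k$ and the direct-image homomorphisms) is $\text{SL}({\mathbb V})$--equivariant, so the resulting identity is automatically equivariant.
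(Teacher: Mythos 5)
Your proposal is correct and follows essentially the same route as the paper: the same commutative diagram built from \eqref{s1} for $k$ and $k-1$ with identity middle column, the same identification of the left-hand vertical map $\mu$ as the map induced by ${\mathbb L}\,\subset\,{\mathbb V}_1$, and the conclusion read off from the induced maps on cokernels. The only difference is that you make explicit the final step (the verification that $({\rm Id}_{L^{n-k}}\otimes\varpi^0_k)\circ\theta_2\,=\,\theta'_2$ on decomposable symmetric tensors), which the paper compresses into ``the proposition follows from \eqref{d1}''.
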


\begin{proof}
Consider the short exact sequence of sheaves in \eqref{s1}. We have the commutative diagram of homomorphism
\begin{equation}\label{d1}
\begin{matrix}
&& &&&& 0\\
&& &&&& \Big\downarrow\\
&& 0 &&&& L^n\otimes K^{\otimes k}_X\\ 
&& \Big\downarrow &&&& \Big\downarrow\\
0 & \longrightarrow & R^0p_{1*}((p^*_2 L^n)\otimes{\mathcal O}_{X\times X} (-(k+1)\Delta))
& \longrightarrow & R^0p_{1*} p^*_2 L^n & \longrightarrow & J^k(L^n) & \longrightarrow & 0\\
&& ~\,~\,\Big\downarrow\mu && \Vert && \Big\downarrow\\
0 & \longrightarrow & R^0p_{1*}((p^*_2 L^n)\otimes{\mathcal O}_{X\times X} (-k\Delta))
& \longrightarrow & R^0p_{1*} p^*_2 L^n & \longrightarrow & J^{k-1}(L^n) & \longrightarrow & 0\\
&& \Big\downarrow && && \Big\downarrow\\
&& L^n\otimes K^{\otimes k}_X &&&& 0\\
&& \Big\downarrow\\
&& 0
\end{matrix}
\end{equation}
where the bottom exact row is \eqref{s1} with $k-1$ substituted in place of $k$, the top exact
row is the one in \eqref{s1}, the right vertical exact sequence is the one in \eqref{e2} and the
left vertical exact sequence is a part of the long
exact sequence of direct images, under $p_1$, for the short exact sequence
$$
0\, \longrightarrow\, (p^*_2 L^n)\otimes{\mathcal O}_{X\times X} (-(k+1)\Delta)\,
\longrightarrow\, (p^*_2 L^n)\otimes{\mathcal O}_{X\times X} (-k\Delta)
\, \longrightarrow\, (L^n\otimes K^{\otimes k}_X)\vert_\Delta \, \longrightarrow\, 0
$$
of sheaves on $X\times X$ obtained by tensoring \eqref{e0} with $p^*_2L^n$.
Using \eqref{e7}, in \eqref{e14} we saw that
$$
R^0p_{1*}((p^*_2 L^n)\otimes{\mathcal O}_{X\times X} (-(k+1)\Delta))
\,=\, {\mathbb L}^{k+1}\otimes{\mathbb V}_{n-k-1}\, ,
$$
So replacing $k$ by $k-1$, we have
$$
R^0p_{1*}((p^*_2 L^n)\otimes{\mathcal O}_{X\times X} (-k\Delta))
\,=\, {\mathbb L}^{k}\otimes{\mathbb V}_{n-k}\, .
$$
Using these isomorphisms, the injective homomorphism
$$
\mu\, :\,{\mathbb L}^{k+1}\otimes{\mathbb V}_{n-k-1}\,=\,
{\mathbb L}^{k+1}\otimes\text{Sym}^{n-k-1}({\mathbb V}_1)\, \longrightarrow\,
{\mathbb L}^{k}\otimes\text{Sym}^{n-k}({\mathbb V}_1)
$$
in \eqref{d1} is the natural homomorphism induced by the inclusion of ${\mathbb L}$ in
${\mathbb V}_1$. Now the proposition follows from \eqref{d1}.
\end{proof}

The remaining case will now be considered.

\subsection{The case of $k\, > \, n\, \geq\, 0$}

Since $n\, \geq\, 0$, it follows that $H^1(X, \, L^n)\,=\, 0$. Hence in \eqref{e4}
we have
$$
R^1p_{1*} p^*_2 L^n\, =\, 0\, ;
$$
see \eqref{ci}. Since $n-k-1\, <\, 0$, we have $H^0(X,\, L^{n-k-1})\,=\, 0$.
Therefore, from \eqref{e7} and \eqref{ci} we conclude that
$$
R^0p_{1*}((p^*_2 L^n)\otimes{\mathcal O}_{X\times X} (-(k+1)\Delta))\,=\, 0\, .
$$
Consequently, the exact sequence in \eqref{e4} becomes the short exact sequence
\begin{equation}\label{s13}
0\, \longrightarrow\, R^0p_{1*} p^*_2 L^n \, \longrightarrow\, J^k(L^n)
\, \longrightarrow\, R^1p_{1*}((p^*_2 L^n)\otimes{\mathcal O}_{X\times X} (-(k+1)\Delta))
\, \longrightarrow\, 0\, .
\end{equation}
Now using \eqref{e7} and \eqref{ci} together with the projection formula, the exact
sequence in \eqref{s13} becomes the exact sequence
\begin{equation}\label{e17}
0 \, \longrightarrow\, {\mathbb V}_n\, \stackrel{\gamma_3}{\longrightarrow}\, J^k(L^n) \,
\stackrel{\theta_3}{\longrightarrow}\,{\mathbb L}^{k+1}\otimes R^1p_{1*} (p^*_2 L^{n-k-1})
\, \longrightarrow\, 0\,.
\end{equation}
The homomorphisms $\gamma_3$ and and $\theta_3$ in \eqref{e17} are clearly
${\rm SL}({\mathbb V})$--equivariant.

\begin{lemma}\label{lem1}
The short exact sequence in \eqref{e17} has a canonical ${\rm SL}({\mathbb V})$--equivariant
holomorphic splitting.
\end{lemma}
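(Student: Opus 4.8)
The plan is to prove existence and uniqueness of the equivariant splitting separately, uniqueness being exactly what makes it canonical. Throughout I will use the standard dictionary identifying an ${\rm SL}({\mathbb V})$--equivariant holomorphic bundle on $X={\mathbb P}({\mathbb V})={\rm SL}({\mathbb V})/B$ (where $B$ is the isotropy of a point, a Borel subgroup) with the representation of $B$ on its fiber over the base point, equivariant bundle maps corresponding to $B$--module maps of fibers. As ordinary bundles, the sub in \eqref{e17} is the trivial bundle ${\mathbb V}_n={\mathcal O}_X^{\oplus(n+1)}$, while by Serre duality as in \eqref{sd} the quotient is $Q:={\mathbb L}^{k+1}\otimes{\mathbb V}^*_{k-n-1}\,\cong\,{\mathcal O}_X(-(k+1))^{\oplus(k-n)}$, using $k>n\geq 0$.

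For canonicity I would show $\text{Hom}_{{\rm SL}({\mathbb V})}(Q,\,{\mathbb V}_n)=0$, so that there is at most one equivariant splitting. Equivariant bundle maps are the ${\rm SL}({\mathbb V})$--invariants in $H^0(X,\, \mathcal{H}om(Q,{\mathbb V}_n))=\text{Sym}^n{\mathbb V}\otimes\text{Sym}^{k-n-1}{\mathbb V}\otimes\text{Sym}^{k+1}{\mathbb V}$, where I have used $H^0(X,\,{\mathbb L}^{-(k+1)})\cong\text{Sym}^{k+1}{\mathbb V}$. By Clebsch--Gordan the first two factors contribute only summands $\text{Sym}^m{\mathbb V}$ with $m\leq k-1$, and such a summand can never combine with $\text{Sym}^{k+1}{\mathbb V}$ to produce a trivial constituent; hence the invariants vanish. (Equivalently, over the base point the fibers of ${\mathbb V}_n$ and $Q$ occupy disjoint $B$--weights, so no nonzero $B$--map between them exists.)

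For existence I would first note that the sequence already splits non-equivariantly, since $\text{Ext}^1(Q,\,{\mathbb V}_n)=H^1(X,\,{\mathcal O}_X(k+1))^{\oplus(n+1)(k-n)}=0$ as $k+1\geq 2$. Thus the set ${\mathcal S}$ of holomorphic splittings is a nonempty affine space modelled on the ${\rm SL}({\mathbb V})$--representation $V:=\text{Hom}(Q,\,{\mathbb V}_n)$, on which ${\rm SL}({\mathbb V})$ acts by affine transformations (by conjugation of splittings) with linear part the action on $V$; a fixed point is precisely an equivariant splitting. Fixing a basepoint $s_0\in{\mathcal S}$ gives a $1$--cocycle $g\longmapsto g\cdot s_0-s_0\in V$ whose class in $H^1({\rm SL}({\mathbb V}),\,V)$ is the obstruction to an invariant splitting; since ${\rm SL}({\mathbb V})$ is reductive in characteristic zero, the category of its rational representations is semisimple and this cohomology vanishes, so a fixed point exists. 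With the previous paragraph, it is unique.

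The formal parts (the non-equivariant splitting and the uniqueness) are routine; the crux is upgrading to equivariance, i.e. the vanishing of the obstruction cocycle. If one prefers to avoid invoking vanishing of rational group cohomology, the same splitting can be produced by hand: the image of $\gamma_3$ consists of the $k$--jets of global sections, which in a local holomorphic trivialization at the base point are the polynomials of degree $\leq n$, and a direct computation shows that the raising operator of $\mathfrak b$ acts on $k$--jets by $g\longmapsto z^2g'-nzg$, hence sends $z^a$ to $(a-n)z^{a+1}$ and in particular annihilates the top jet $z^n$. Consequently the span of $z^{n+1},\dots,z^k$ is a $\mathfrak b$--stable complement to the image of $\gamma_3$, which is exactly the desired splitting. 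Setting up this local model of the ${\rm SL}({\mathbb V})$--action on the jet bundle, and matching it with the weight bookkeeping above, is the step that demands the most care.
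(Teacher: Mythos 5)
Your proposal is correct, but it proves the lemma by a genuinely different route than the paper. The paper's proof is short and completely explicit: since $k\,>\,n$, the forgetful projection $J^k(L^n)\,\longrightarrow\, J^n(L^n)$ from \eqref{e2}, combined with the identification $J^n(L^n)\,=\,{\mathbb V}_n$ of \eqref{e15}, is an ${\rm SL}({\mathbb V})$--equivariant retraction $\beta$ satisfying $\beta\circ\gamma_3\,=\,{\rm Id}_{{\mathbb V}_n}$, whence $J^k(L^n)\,=\,{\rm image}(\gamma_3)\oplus{\rm kernel}(\beta)$. You instead argue abstractly: existence of some holomorphic splitting from ${\rm Ext}^1(Q,\,{\mathbb V}_n)\,=\,H^1(X,\,{\mathcal O}_X(k+1))^{\oplus(n+1)(k-n)}\,=\,0$, upgraded to an equivariant splitting by complete reducibility of rational ${\rm SL}({\mathbb V})$--representations (vanishing of the group-cohomology obstruction), plus uniqueness from ${\rm Hom}_{{\rm SL}({\mathbb V})}(Q,\,{\mathbb V}_n)\,=\,0$ via Clebsch--Gordan. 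Both steps are sound, and your weight bookkeeping checks out: at the base point the fiber of ${\mathbb V}_n$ carries the weights $-n,\,-n+2,\dots,n$ while the fiber of $Q$ carries $n+2,\,n+4,\dots,2k-n$, so no nonzero $B$--map exists. Your approach buys something the paper never states: uniqueness of the equivariant splitting, which is the strongest possible justification of the word ``canonical'' and shows that any construction whatsoever — the paper's $\beta$, or your ${\mathfrak b}$--stable complement spanned by $z^{n+1},\dots,z^k$ — must yield the same decomposition. The paper's approach buys explicitness: the splitting is identified as jet truncation, and that concrete description is what is used downstream (Proposition \ref{prop4} describes $\phi_k$ precisely in these split coordinates). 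Note finally that your by-hand complement is exactly ${\rm kernel}(\beta)$, the space of jets vanishing to order $n+1$, so the two constructions visibly agree — as your uniqueness argument already guarantees they must.
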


\begin{proof}
Since $k\,>\, n$, there is a natural projection
$$
J^k(L^n)\, \longrightarrow\, J^n(L^n)
$$
(see \eqref{e2}). Now $J^n(L^n)\,=\, {\mathbb V}_n$ by \eqref{e15}, so this projection defines a
projection
\begin{equation}\label{e18}
\beta\, :\, J^k(L^n)\, \longrightarrow\, {\mathbb V}_n\, .
\end{equation}
It is straight-forward to check that $\beta\circ\gamma_3\,=\, \text{Id}_{{\mathbb V}_n}$, where
$\gamma_3$ and $\beta$ are constructed in \eqref{e17} and \eqref{e18} respectively. This produces
an ${\rm SL}({\mathbb V})$--equivariant holomorphic splitting
$$
J^k(L^n) \,=\, \text{image}(\gamma_3) \oplus \text{kernel}(\beta)
$$
of the short exact sequence in \eqref{e17}.
\end{proof}

Lemma \ref{lem1} gives a holomorphic decomposition
\begin{equation}\label{e19}
J^k(L^n)\,=\, {\mathbb V}_n\oplus ({\mathbb L}^{k+1}\otimes R^1p_{1*}(p^*_2 L^{n-k-1})
\end{equation}
which is compatible with the actions of $\text{SL}({\mathbb V})$. Now,
$R^1p_{1*}(p^*_2 L^{n-k-1})\,=\, {\mathbb V}^*_{k-n-1}$ (see \eqref{sd}), and
${\mathbb V}^*_{k-n-1}\,=\, {\mathbb V}_{k-n-1}$ which is
constructed using the isomorphism in \eqref{l3i}. Therefore, the decomposition
in \eqref{e19} can be reformulated as
\begin{equation}\label{e20}
J^k(L^n)\,=\, {\mathbb V}_n\oplus ({\mathbb L}^{k+1}\otimes {\mathbb V}_{k-n-1})\, =\,
{\mathbb V}_n\oplus (L^{-(k+1)}\otimes {\mathbb V}_{k-n-1})
\end{equation}
using $\Psi$ in \eqref{l3}. As before, the isomorphisms in \eqref{e20} are
compatible with the actions of $\text{SL}({\mathbb V})$.

If $k-1\, >\, n\, \geq\, 0$, then from \eqref{e20} we know that
$$
J^{k-1}(L^n)\,=\, {\mathbb V}_n\oplus ({\mathbb L}^{k}\otimes {\mathbb V}_{k-n-2})\, =\,
{\mathbb V}_n\oplus (L^{-k}\otimes {\mathbb V}_{k-n-2})\, ,
$$
and if $k-1\, =\, n\, \geq\, 0$, then from \eqref{e15} we have
$$
J^{k-1}(L^n)\,=\, {\mathbb V}_{k-1}\, .
$$

The proof of the following proposition is similar to the proofs of Proposition \ref{prop2}
and Proposition \ref{prop3}.

\begin{proposition}\label{prop4}
If $k-1\, >\, n\, \geq\, 0$, then the projection $J^k(L^n)\, \longrightarrow\, J^{k-1}(L^n)$
in \eqref{e2} is the homomorphism
$$
{\mathbb V}_n\oplus (L^{-(k+1)}\otimes {\mathbb V}_{k-n-1})\, \longrightarrow\,
{\mathbb V}_n\oplus (L^{-k}\otimes {\mathbb V}_{k-n-2})
$$
which is the direct sum of the identity map of ${\mathbb V}_n$ and the natural contraction map
$L^{-(k+1)}\otimes {\mathbb V}_{k-n-1}\, \longrightarrow\,
L^{-k}\otimes {\mathbb V}_{k-n-2}$.

If $k-1\, =\, n\, \geq\, 0$, then the projection $J^k(L^n)\, \longrightarrow\, J^{k-1}(L^n)$
in \eqref{e2} is the homomorphism
$$
{\mathbb V}_n\oplus (L^{-(k+1)}\otimes {\mathbb V}_{k-n-1})\, \longrightarrow\,
{\mathbb V}_n
$$
which is the direct sum of the identity map of ${\mathbb V}_n$ and the zero homomorphism of
$L^{-(k+1)}\otimes {\mathbb V}_{k-n-1}\,=\, L^{-(k+1)}$ (recall that ${\mathbb V}_0\,=\,
{\mathcal O}_X$).
\end{proposition}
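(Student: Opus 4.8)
The plan is to show that, with respect to the decomposition \eqref{e20} of both the source and the target, the forgetful homomorphism $\phi_k$ is block diagonal, and then to identify the two blocks separately. The two ingredients are the functoriality of the forgetful maps $J^k(L^n)\, \longrightarrow\, J^{k-1}(L^n)$ and the commutative--diagram technique already employed in the proofs of Proposition \ref{prop2} and Proposition \ref{prop3}; the only genuinely new feature is that here both the $R^0$ and the $R^1$ direct images in \eqref{e4} survive, so one must keep the constant--jet summand ${\mathbb V}_n$ and the contraction summand separate throughout.

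First I would record that the projection $\beta$ of \eqref{e18} splitting \eqref{e17} is itself a composite of forgetful maps, $\beta\,=\,\phi_{n+1}\circ\cdots\circ\phi_k\,:\,J^k(L^n)\, \longrightarrow\, J^n(L^n)\,=\,{\mathbb V}_n$. Writing $\beta_k$ and $\beta_{k-1}$ for the analogous projections at levels $k$ and $k-1$ (with $\beta_{k-1}\,=\,\text{Id}$ when $k-1\,=\,n$), this gives the identity $\beta_{k-1}\circ\phi_k\,=\,\beta_k$. Hence $\phi_k$ carries $\text{kernel}(\beta_k)$ into $\text{kernel}(\beta_{k-1})$, so $\phi_k$ respects the two summands of \eqref{e20}. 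Moreover the inclusion $\gamma_3\,:\,{\mathbb V}_n\,=\,R^0p_{1*}p^*_2L^n\,\hookrightarrow\, J^k(L^n)$ of \eqref{e17} sends a global section of $L^n$ to its $k$--jet, and forgetting such a jet to order $k-1$ yields the $(k-1)$--jet of the same section; thus $\phi_k$ intertwines the inclusions $\gamma_3$ at levels $k$ and $k-1$, which identifies the first block of $\phi_k$ with the identity of ${\mathbb V}_n$ in both cases of the proposition.

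It remains to describe the block $\text{kernel}(\beta_k)\, \longrightarrow\,\text{kernel}(\beta_{k-1})$. If $k-1\,=\,n$, then $J^{k-1}(L^n)\,=\,{\mathbb V}_n$ by \eqref{e15}, so $\text{kernel}(\beta_{k-1})\,=\,0$ and this block is forced to be the zero homomorphism of $L^{-(k+1)}\otimes{\mathbb V}_{k-n-1}\,=\,L^{-(k+1)}$; this settles the second case at once. If $k-1\,>\,n$, I would reproduce the commutative diagram of Proposition \ref{prop2}, now applied to the short exact sequence \eqref{s13} at levels $k$ and $k-1$. The inclusion ${\mathcal O}_{X\times X}(-(k+1)\Delta)\,\subset\,{\mathcal O}_{X\times X}(-k\Delta)$, tensored with $p^*_2L^n$, produces a morphism between the two copies of \eqref{s13}, whence a commutative square
$$
\begin{matrix}
J^k(L^n) & \stackrel{\theta_3}{\longrightarrow} & {\mathbb L}^{k+1}\otimes R^1p_{1*}(p^*_2 L^{n-k-1})\\
\Big\downarrow\,\phi_k && \Big\downarrow\,\mu\\
J^{k-1}(L^n) & \stackrel{\theta_3}{\longrightarrow} & {\mathbb L}^{k}\otimes R^1p_{1*}(p^*_2 L^{n-k})
\end{matrix}
$$
in which the horizontal arrows restrict to isomorphisms on $\text{kernel}(\beta_k)$ and $\text{kernel}(\beta_{k-1})$ respectively, since $\text{kernel}(\theta_3)\,=\,\text{image}(\gamma_3)$ is complementary to $\text{kernel}(\beta)$. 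Exactly as in Proposition \ref{prop2}, the arrow $\mu$ is, under the identifications provided by \eqref{e7} and \eqref{sd}, the contraction of one factor of ${\mathbb L}\,\subset\,{\mathbb V}$ against ${\mathbb V}^*$, namely $\mu\,:\,{\mathbb L}^{k+1}\otimes{\mathbb V}^*_{k-n-1}\, \longrightarrow\,{\mathbb L}^{k}\otimes{\mathbb V}^*_{k-n-2}$; translating through $\Psi$ in \eqref{l3} and the symplectic isomorphism \eqref{l3i}, this is precisely the natural contraction $L^{-(k+1)}\otimes{\mathbb V}_{k-n-1}\, \longrightarrow\,L^{-k}\otimes{\mathbb V}_{k-n-2}$, completing the first case.

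I expect the main obstacle to be the identification of the connecting homomorphism $\mu$ with the contraction map, together with the verification that the two horizontal $\theta_3$'s genuinely restrict to the claimed isomorphisms on the kernels of $\beta$; this is the point at which one must check the compatibility of Serre duality \eqref{sd}, the isomorphism $\Psi$ of \eqref{l3}, and the symplectic identification \eqref{l3i} with the forgetful maps. Everything else is bookkeeping parallel to the proofs of Proposition \ref{prop2} and Proposition \ref{prop3}.
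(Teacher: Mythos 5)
Your proof is correct and takes essentially the same route as the paper: the paper offers no separate argument for Proposition \ref{prop4}, stating only that it follows by the same commutative--diagram technique used for Proposition \ref{prop2} and Proposition \ref{prop3}, which is precisely what you carry out (the map of direct-image exact sequences induced by ${\mathcal O}_{X\times X}(-(k+1)\Delta)\subset {\mathcal O}_{X\times X}(-k\Delta)$, with $\mu$ identified as the contraction by ${\mathbb L}\subset {\mathbb V}_1$). Your explicit check that $\phi_k$ respects the splitting of \eqref{e20} --- via $\beta_{k-1}\circ\phi_k\,=\,\beta_k$ and $\phi_k\circ\gamma_3\,=\,\gamma_3$ --- is a detail the paper leaves implicit, and it is exactly the point that needs saying since, unlike in Propositions \ref{prop2} and \ref{prop3}, both direct-image terms survive here.
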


\subsection{The jet bundle}

Combining \eqref{e16}, \eqref{e15} and \eqref{e20}, we have the following:

\begin{theorem}\label{thm1}
If $n\, <\, 0$ or $n\, \geq\, k$, then
$$
J^k(L^n)\,=\, L^{n-k}\otimes {\mathbb V}_k\, .
$$
If $k\, > \, n\, \geq\, 0$, then
$$
J^k(L^n)\,=\, {\mathbb V}_n\oplus (L^{-(k+1)}\otimes {\mathbb V}_{k-n-1})\, .
$$
Both the isomorphisms are ${\rm SL}({\mathbb V})$--equivariant.
\end{theorem}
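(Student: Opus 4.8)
The plan is to assemble the theorem directly from the three case analyses carried out in the preceding subsections, since between them they treat every pair of integers $(n,\,k)$ with $k\,\geq\,0$. First I would check that the three ranges are exhaustive and that the stated dichotomy is correct: the condition ``$n\,<\,0$ or $n\,\geq\,k$'' is exactly the union of the first subsection (where $n\,<\,0$) and the second subsection (where $n\,\geq\,k$), while the complementary condition ``$k\,>\,n\,\geq\,0$'' is precisely the range treated in the third subsection. The only point worth verifying is that the first two ranges produce the \emph{same} formula, so that they can be merged into a single line of the statement.

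For the first displayed isomorphism I would invoke \eqref{e16}, which gives $J^k(L^n)\,=\,L^{n-k}\otimes{\mathbb V}_k$ when $n\,<\,0$, together with \eqref{e15}, which gives the identical formula $J^k(L^n)\,=\,L^{n-k}\otimes{\mathbb V}_k$ when $n\,\geq\,k$. Since the right-hand sides agree, these two cases combine into the single statement for ``$n\,<\,0$ or $n\,\geq\,k$''. For the second displayed isomorphism I would simply cite \eqref{e20}, which in the range $k\,>\,n\,\geq\,0$ yields the decomposition $J^k(L^n)\,=\,{\mathbb V}_n\oplus(L^{-(k+1)}\otimes{\mathbb V}_{k-n-1})$.

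Finally, the equivariance assertion requires no new argument: in each of the three subsections the isomorphism in question was constructed out of $\text{SL}({\mathbb V})$--equivariant data (the long exact sequence \eqref{e4}, the isomorphisms $\Phi$ and $\Psi$, and the various contraction maps) and was explicitly recorded as $\text{SL}({\mathbb V})$--equivariant. Collecting these remarks gives the equivariance of both displayed isomorphisms. There is no genuine obstacle here: all the substantive work---the cohomological computations via the long exact sequence of direct images, the identification of $\ker(\gamma_1)$ and of $\text{coker}(\gamma_2)$, and the canonical splitting of \eqref{e17} supplied by Lemma \ref{lem1}---has already been carried out case by case, so the theorem follows by bookkeeping, merely pasting the three conclusions together according to the trichotomy in $(n,\,k)$.
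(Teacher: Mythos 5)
Your proposal is correct and matches the paper's own proof, which likewise obtains the theorem by combining \eqref{e16}, \eqref{e15} and \eqref{e20} from the three case-by-case subsections, with the ${\rm SL}({\mathbb V})$--equivariance already recorded in each case. No further argument is needed beyond the bookkeeping you describe.
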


\section{Holomorphic connections on vector bundles}\label{s4}

In this section we construct and study natural differential holomorphic operators between 
natural holomorphic vector bundles on a Riemann surface equipped with a projective structure.

\subsection{Jets of Riemann surfaces with a projective structure}

Let $M$ be a compact connected Riemann surface
of genus $g$. A holomorphic coordinate chart on $M$ is a pair
of the form $(U,\, \varphi)$, where $U\, \subset\, M$ is an open subset and $\varphi\,
:\, U\, \longrightarrow\, {\mathbb P}({\mathbb V})\,=\, X$ is a holomorphic embedding. A holomorphic
coordinate atlas on $M$ is a collection of coordinate charts $\{(U_i,\, \varphi_i)\}_{i\in I}$
such that $M\,=\, \bigcup_{i\in I} U_i$. A projective structure on $M$ is given by a holomorphic 
coordinate atlas $\{(U_i,\, \varphi_i)\}_{i\in I}$ such that for all 
pairs $(i, j) \,\in\, I\times I$ for which $U_i\bigcap U_j\, \not=\, \emptyset$, there exists 
an element $\tau_{j,i}\, \in\, \text{Aut}({\mathbb P}({\mathbb V}))\,=\, \text{PGL}({\mathbb V})$
such that $\varphi_j\circ\varphi^{-1}_i$ is the restriction of $\tau_{j,i}$ to
$\varphi_i(U_i\bigcap U_j)$. A holomorphic coordinate atlas satisfying this condition is called
a projective atlas. Two such projective atlases $\{(U_i,\, \varphi_i)\}_{i\in I}$ and
$\{(U_i,\, \varphi_i)\}_{i\in J}$ are called equivalent if
their union $\{(U_i,\, \varphi_i)\}_{i\in I\cup J}$ is again a
projective atlas. A \textit{projective structure} on $M$
is an equivalence class of projective atlases (see \cite{Gu}).

Giving a projective structure on $M$ is equivalent to giving a holomorphic Cartan geometry on
$M$ for the pair of groups $(\text{PSL}(2,{\mathbb C}),\, B)$, where $B\, \subset\,
\text{PSL}(2,{\mathbb C})$ is the Borel subgroup
$$
B\, :=\, \Big\{
\begin{pmatrix}
a & b\\
c &d
\end{pmatrix}\, \in\, \text{PSL}(2,{\mathbb C})\, \mid\, d\, =\, 0\Big\}\, ;
$$
see \cite{Sh} for Cartan geometry.

Note that the collection $\{\tau_{j,i}\}$ in the definition of a projective atlas forms a 
$1$--cocycle with values in $\text{PGL}({\mathbb V})$. The corresponding cohomology class in 
$H^1(M,\, \text{PGL}({\mathbb V}))$ depends only on the projective structure and can be lifted 
to $H^1(M,\, \text{SL}({\mathbb V}))$. Moreover the space of such lifts is in bijection with 
the theta characteristics of $M$ (see \cite{Gu}). We recall that the theta characteristic on 
$M$ is a holomorphic line bundle $K^{1/2}_M$ on $M$ together with a holomorphic isomorphism of 
$K^{1/2}_M\otimes K^{1/2}_M$ with the holomorphic cotangent bundle $K_M$.

We fix a theta characteristic on $M$. So a projective structure on $M$ is given by a
coordinate atlas $\{(U_i,\, \varphi_i)\}_{i\in I}$ together with an element
$$
\tau_{j,i}\, \in\, \text{SL}({\mathbb V})
$$
for every ordered pair $(i,\, j)\,\in\, I\times I$ such that $U_i\bigcap U_j\, \not=\,
\emptyset$ such that the following four conditions hold:
\begin{enumerate}
\item $\varphi_j\circ\varphi^{-1}_i$ is the restriction to $\varphi_i(U_i\bigcap U_j)$ of the
automorphism of $X$ given by $\tau_{j,i}$,

\item $\tau_{i,k}\tau_{k,j}\tau_{j,i}\,=\, \text{Id}_{\mathbb V}$ for all $i,\, j,\, k\, \in\, I$
such that $U_i\bigcap U_j\bigcap U_k\, \not=\, \emptyset$,

\item $\tau_{i,i}\,=\, \text{Id}_{\mathbb V}$ for all $i\, \in\, I$, and

\item the theta characteristic corresponding to the data is the given one.
\end{enumerate}

Two such collections of triples $\{\{(U_i,\, \varphi_i)\}_{i\in I},\, \{\tau_{i,j}\}\}$ and
$\{\{(U_i,\, \varphi_i)\}_{i\in J},\, \{\tau_{i,j}\}\}$ are called equivalent if their union
$\{\{(U_i,\, \varphi_i)\}_{i\in I\cup J},\, \{\tau_{i,j}\}\}$ is again a part of a collection
of triples satisfying the above conditions. A $\text{SL}({\mathbb V})$--\textit{projective
structure} on $M$ is an equivalence class of collection of triples satisfying the above conditions.

Giving a $\text{SL}({\mathbb V})$--projective structure on $M$
is equivalent to giving a holomorphic Cartan geometry on $M$ for the pair of
groups $(\text{SL}(2,{\mathbb C}),\, B_0)$, where $B_0\, \subset\, \text{SL}(2,{\mathbb C})$ is the
Borel subgroup
$$
B_0\, :=\, \Big\{
\begin{pmatrix}
a & b\\
c &d
\end{pmatrix}\, \in\, \text{SL}(2,{\mathbb C})\, \mid\, d\, =\, 0\Big\}\, .
$$

Let $P\,=\, \{\{(U_i,\, \varphi_i)\}_{i\in I},\, \{\tau_{i,j}\}\}$ be a $\text{SL}({\mathbb V})$--projective
structure on $M$. Recall that the collection $\{\tau_{i,j}\}$ forms a $1$--cocycle on $M$
with values in the group $\text{SL}({\mathbb V})$. Hence $\{\tau_{i,j}\}$ produce a holomorphic rank
two bundle $\mathcal V$ on $M$ equipped with a holomorphic connection
\begin{equation}\label{D}
D\, :\, {\mathcal V}\, \longrightarrow\, {\mathcal V}\otimes K_M\, ,
\end{equation}
where $K_M$ is the holomorphic cotangent bundle of $M$ (see
\cite{At2} for holomorphic connections). To construct
the pair $({\mathcal V},\, D)$ explicitly, for each $i\, \in\, I$, consider the trivial
holomorphic vector bundle $U_i\times {\mathbb V}
\, \longrightarrow\, U_i$ equipped with the
trivial connection. For any ordered pair $(i,\, j)\, \in\,I\times I$ with $U_i\bigcap U_j\,
\not=\, \emptyset$, glue $U_i\times {\mathbb V}$ and $U_j\times {\mathbb V}$ over
$U_i\bigcap U_j$ using the
automorphism of $(U_i\bigcap U_j)\times \mathbb V$
that sends any $$(y,\, v) \, \in\, (U_i\bigcap U_j)
\times{\mathbb V}\, \subset\, U_j\times{\mathbb V}$$
to $(y,\, \tau_{i,j}(v)) \, \in\, (U_i\bigcap U_j)\times {\mathbb V}\, \subset\, 
U_i\times{\mathbb V}$. This automorphism of $(U_i\bigcap U_j)\times \mathbb V$ is holomorphic, 
and it evidently preserves the trivial connection. Therefore, this gluing operation
produces a holomorphic vector bundle on $M$, which we shall denote by $\mathcal V$,
and a holomorphic connection on $\mathcal V$, which we shall denote by $D$.

The diagonal action of $\text{SL}({\mathbb V})$ on $X\times{\mathbb V}$ preserves the
subbundle $\mathbb L$ in \eqref{exl}. Therefore, the subbundles
$\varphi_i(U_i)\times {\mathbb L}\, \subset\, \varphi(U_i)\times\mathbb V$ patch together
compatibly to produce a holomorphic line subbundle of the vector bundle $\mathcal V$ on $M$
constructed above. This line subbundle of $\mathcal V$ will be denoted by $\mathcal L$. Let
\begin{equation}\label{bl}
{\mathbf L}\, :=\, {\mathcal V}/\mathcal L
\end{equation}
be the quotient bundle.

Recall that the isomorphism $\Psi$ in \eqref{l3} and the isomorphism $\Phi_0$
in \eqref{e-3} are both ${\rm SL}({\mathbb V})$--equivariant. However, it should be
clarified that a nonzero element $\omega\, \in\, \bigwedge\nolimits^2 {\mathbb V}$
(see \eqref{e5}) was used in their construction. Since $\Psi$ and $\Phi_0$ are
${\rm SL}({\mathbb V})$--equivariant, they produce isomorphisms
\begin{equation}\label{l4}
{\mathbf L}\,=\, {\mathcal L}^* \ \ \text{ and }
{\mathbf L}^{\otimes 2}\,=\, ({\mathcal L}^*)^{\otimes 2}\, = \, TM
\end{equation}
respectively. Recall that we fixed a theta characteristic on $M$. The fourth condition in the
definition of a projective structure says that the theta characteristic $\mathcal L$ 
coincides with the chosen one.

For any $j\, \geq\, 1$, the vector bundle $\text{Sym}^j({\mathcal V})$ will be denoted by 
${\mathcal V}_j$. Also, ${\mathcal V}_0$ will denote the trivial line bundle ${\mathcal O}_M$. 
The holomorphic connection $D$ on $\mathcal V$ constructed in \eqref{D}
induces a holomorphic connection on ${\mathcal 
V}_j$ for every $j\, \geq\, 1$; let
\begin{equation}\label{D2}
D_j\, :\, {\mathcal V}_j\, \longrightarrow\, {\mathcal V}_j\otimes K_M
\end{equation}
be this connection given by $D$.
The trivial connection on ${\mathcal V}_0\,=\, {\mathcal O}_M$ will be denoted by $D_0$. The 
connections $D_m$ and $D_n$ on ${\mathcal V}_m$ and ${\mathcal V}_n$ respectively together 
produce a holomorphic connection on ${\mathcal V}_m\otimes {\mathcal V}_n$.

\begin{proposition}\label{prop1}
Let $M$ be a compact connected Riemann surface equipped with a projective structure $P$. Take integers
$m\, >\, n\, \geq\, 0$. Then ${\mathcal V}_m\otimes {\mathcal V}_n$ has a canonical decomposition
$$
{\mathcal V}_m\otimes {\mathcal V}_n\,=\,\bigoplus_{i=0}^n {\mathcal V}_{m+n-2i}\, .
$$
The above isomorphism takes the connection on ${\mathcal V}_m\otimes {\mathcal V}_n$ induced
by $D_m$ and $D_n$ to the connection $\bigoplus_{i=0}^n D_{m+n-2i}$ on
$\bigoplus_{i=0}^n {\mathcal V}_{m+n-2i}$.
\end{proposition}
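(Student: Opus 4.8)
The plan is to realize the claimed decomposition as the fiberwise Clebsch--Gordan decomposition for the group $\text{SL}(\mathbb{V})$, transported to $M$ via the projective structure. The essential algebraic input is the classical fact that, as representations of $\text{SL}(\mathbb{V})$,
$$
\text{Sym}^m(\mathbb{V})\otimes \text{Sym}^n(\mathbb{V})\,=\,\bigoplus_{i=0}^n \text{Sym}^{m+n-2i}(\mathbb{V})\, ,
$$
the sum running over $i\,=\,0,\,\ldots,\,n$ precisely because $m\,>\,n$. Since $\mathbb{V}$ is two--dimensional, this decomposition is multiplicity--free, so the isotypic summands are canonically determined subspaces; the projections onto them and the inclusions of them are built from the fixed element $\omega\,\in\,\bigwedge^2\mathbb{V}$ in \eqref{e5} (the contraction of $i$ pairs of tensor factors is performed using $\omega$ through the isomorphism \eqref{l3i}). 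Because $\text{SL}(\mathbb{V})$ fixes $\omega$, all of these structure maps are $\text{SL}(\mathbb{V})$--equivariant.

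First I would record the consequence of this on $X\,=\,{\mathbb P}(\mathbb{V})$. The trivial bundle ${\mathbb V}_1\,=\,X\times\mathbb{V}$ carries the trivial connection, and ${\mathbb V}_j\,=\,\text{Sym}^j({\mathbb V}_1)\,=\,X\times\text{Sym}^j(\mathbb{V})$. Applying the Clebsch--Gordan identity fiberwise with the constant structure maps yields an $\text{SL}(\mathbb{V})$--equivariant isomorphism of trivial holomorphic vector bundles
$$
{\mathbb V}_m\otimes {\mathbb V}_n\,=\,\bigoplus_{i=0}^n {\mathbb V}_{m+n-2i}
$$
that carries the trivial connection on the left to the direct sum of trivial connections on the right; this last point is immediate, since the identifying maps are constant in $X$ and hence horizontal for the trivial connection.

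Next I would transport this to $M$ using the $\text{SL}(\mathbb{V})$--projective structure $P\,=\,\{\{(U_i,\,\varphi_i)\},\,\{\tau_{i,j}\}\}$. By construction $\mathcal{V}$ is glued from the trivial bundles $U_i\times\mathbb{V}$ by the $\text{SL}(\mathbb{V})$--valued cocycle $\{\tau_{i,j}\}$, carrying the local trivial connections to the connection $D$ in \eqref{D}; consequently ${\mathcal V}_m\otimes {\mathcal V}_n$ and $\bigoplus_i {\mathcal V}_{m+n-2i}$ are glued from the corresponding trivial bundles by the cocycles $\text{Sym}^m(\tau_{i,j})\otimes\text{Sym}^n(\tau_{i,j})$ and $\bigoplus_i\text{Sym}^{m+n-2i}(\tau_{i,j})$. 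Because the fiberwise Clebsch--Gordan maps commute with the action of every $\tau_{i,j}\,\in\,\text{SL}(\mathbb{V})$, they intertwine these two cocycles, so the local isomorphisms of the previous step agree on each overlap $U_i\cap U_j$ and patch to a global holomorphic isomorphism ${\mathcal V}_m\otimes {\mathcal V}_n\,=\,\bigoplus_{i=0}^n {\mathcal V}_{m+n-2i}$ on $M$, canonical once $\omega$ (equivalently, the theta characteristic) is fixed.

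Finally, the connection assertion follows from the connection statement on $X$ together with the gluing. Over each chart $U_i$ the induced connection on ${\mathcal V}_m\otimes {\mathcal V}_n$ is the tensor--product trivial connection, each $D_{m+n-2i}$ restricts to a trivial connection, and the constant Clebsch--Gordan identification is horizontal; hence the decomposition intertwines the induced connection with $\bigoplus_{i=0}^n D_{m+n-2i}$ over every $U_i$, and therefore globally. I expect the only point requiring genuine care to be the $\text{SL}(\mathbb{V})$--equivariance of the contraction and inclusion maps defining the decomposition, which is exactly where the invariance of $\omega$ under $\text{SL}(\mathbb{V})$ enters; once that is established, both the gluing and the compatibility with connections are formal.
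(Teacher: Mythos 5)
Your proof is correct and follows essentially the same route as the paper's: both invoke the Clebsch--Gordan decomposition $\mathrm{Sym}^m(\mathbb{V})\otimes\mathrm{Sym}^n(\mathbb{V})=\bigoplus_{i=0}^n\mathrm{Sym}^{m+n-2i}(\mathbb{V})$ fiberwise over the charts $U_i$ of the $\mathrm{SL}(\mathbb{V})$--projective atlas, patch the local isomorphisms globally using the $\mathrm{SL}(\mathbb{V})$--equivariance of that decomposition against the cocycle $\{\tau_{i,j}\}$, and deduce the connection statement from the fact that the local identifications are constant, hence flat for the trivial connections that glue to $D_m$, $D_n$ and $D_{m+n-2i}$. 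Your additional remarks on multiplicity--freeness and the role of the fixed $\omega\in\bigwedge^2\mathbb{V}$ in defining the structure maps are accurate refinements of a point the paper leaves implicit.
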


\begin{proof}
First consider the case of $X\,=\, {\mathbb P}({\mathbb V})$ investigated in Section \ref{se2}. The
$\text{SL}({\mathbb V})$--module $\text{Sym}^{j}({\mathbb V})$ is irreducible for every $j\, \geq\, 0$
\cite[p.~150]{FH}, and furthermore, for $m\, >\, n\, \geq\, 0$, the $\text{SL}({\mathbb V})$--module
$\text{Sym}^{m}({\mathbb V})\otimes \text{Sym}^{n}({\mathbb V})$ decomposes as
\begin{equation}\label{td}
\text{Sym}^{m}({\mathbb V})\otimes \text{Sym}^{n}({\mathbb V})\,=\,
\bigoplus_{i=0}^n \text{Sym}^{m+n-2i}({\mathbb V})
\end{equation}
\cite[p.~151, Ex. 11.11]{FH}.

Take a theta characteristic on $M$.
Choose data $\{\{(U_i,\, \varphi_i)\}_{i\in I},\, \{\tau_{i,j}\}\}$ representing a $\text{SL}({\mathbb 
V})$--projective structure associated to $P$. On each $U_i$ we have the 
holomorphic vector bundles $\varphi^*_i {\mathbb V}_j\,=\, U_i\times \text{Sym}^j({\mathbb V})$ equipped with the 
trivial connection. Patching these together using $\{\tau_{i,j}\}$ as the transition functions we construct
a holomorphic connection $D_j$ on the holomorphic vector bundle ${\mathcal 
V}_j$ for every $j\, \geq\, 0$. This connection $D_j$ on ${\mathcal V}_j$ coincides with the connection
constructed in \eqref{D2}.

Now it can be shown that the isomorphism in 
\eqref{td} over each $U_i$ patch together compatibly to give a global holomorphic isomorphism
$$
{\mathcal V}_m\otimes {\mathcal V}_n\,\stackrel{\sim}{\longrightarrow}\,\bigoplus_{i=0}^n {\mathcal V}_{m+n-2i}
$$
over $M$. Indeed, this is an immediate consequence of the fact that the isomorphism in \eqref{td} intertwines
the actions of $\text{SL}({\mathbb V})$. This isomorphism evidently takes the connection on ${\mathcal V}_m
\otimes {\mathcal V}_n$ induced by $D_m$ and $D_n$ to the connection $\bigoplus_{i=0}^n D_{m+n-2i}$ on
$\bigoplus_{i=0}^n {\mathcal V}_{m+n-2i}$. This completes the proof.
\end{proof}

Henceforth, we shall assume that $\text{genus}(M)\,=\, g\, \, \geq\, 1$.
Since all the isomorphisms in Theorem \ref{thm1} are 
${\rm SL}({\mathbb V})$--equivariant, the following corollary is deduced from
Theorem \ref{thm1} in a straight-forward manner.

\begin{corollary}\label{cor1}
Let $M$ be a connected Riemann surface equipped with a projective structure $P$.
If $n\, <\, 0$ or $n\, \geq\, k$, then there is a canonical isomorphism
$$
J^k({\mathbf L}^n)\,=\, {\mathbf L}^{n-k}\otimes {\mathcal V}_k\, .
$$
If $k\, > \, n\, \geq\, 0$, then there is a canonical isomorphism
$$
J^k({\mathbf L}^n)\,=\, {\mathcal V}_n\oplus ({\mathbf L}^{-(k+1)}\otimes {\mathcal V}_{k-n-1})\, .
$$
\end{corollary}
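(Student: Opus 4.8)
The plan is to descend from the projective line $X \,=\, {\mathbb P}({\mathbb V})$ to the Riemann surface $M$ by exploiting the fact that a projective structure is, by construction, an atlas whose transition functions lie in ${\rm SL}({\mathbb V})$. Theorem \ref{thm1} gives the two isomorphisms for $J^k(L^n)$ on $X$, and the crucial feature already emphasized there is that both isomorphisms are ${\rm SL}({\mathbb V})$--equivariant. The strategy is therefore to transport each local isomorphism through the coordinate charts $\varphi_i$ and check that the ${\rm SL}({\mathbb V})$--equivariance forces the local isomorphisms to agree on overlaps, hence to patch into a global isomorphism on $M$. This is the same mechanism used in the proof of Proposition \ref{prop1}.

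First I would fix data $\{\{(U_i,\, \varphi_i)\}_{i\in I},\, \{\tau_{i,j}\}\}$ representing the ${\rm SL}({\mathbb V})$--projective structure $P$ attached to the chosen theta characteristic. On each chart, the embedding $\varphi_i \,:\, U_i \, \longrightarrow\, X$ lets us pull back all the bundles appearing in Theorem \ref{thm1}. Concretely, $\varphi_i^* L \,=\, {\mathbf L}\vert_{U_i}$ by the construction of $\mathbf L$ in \eqref{bl} together with the identification ${\mathbf L}\,=\, {\mathcal L}^*$ in \eqref{l4}, and $\varphi_i^* {\mathbb V}_j \,=\, U_i \times \text{Sym}^j({\mathbb V}) \,=\, {\mathcal V}_j\vert_{U_i}$, exactly as in the proof of Proposition \ref{prop1}. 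Since jet bundles are local and functorial for the local biholomorphisms $\varphi_i$, we get $\varphi_i^* J^k(L^n) \,=\, J^k({\mathbf L}^n)\vert_{U_i}$. Pulling back the isomorphism of Theorem \ref{thm1} over each $U_i$ then yields, in the case $n\,<\,0$ or $n\,\geq\, k$, a local isomorphism $J^k({\mathbf L}^n)\vert_{U_i} \,=\, ({\mathbf L}^{n-k}\otimes{\mathcal V}_k)\vert_{U_i}$, and in the case $k\,>\, n\,\geq\, 0$ a local isomorphism $J^k({\mathbf L}^n)\vert_{U_i} \,=\, ({\mathcal V}_n\oplus({\mathbf L}^{-(k+1)}\otimes{\mathcal V}_{k-n-1}))\vert_{U_i}$.

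The main point, and the step I expect to carry the real content, is the compatibility on overlaps $U_i\bigcap U_j$. Here the transition function relating the two trivializations is precisely the action of $\tau_{j,i}\,\in\, {\rm SL}({\mathbb V})$ on the relevant $X$--bundle, on both the jet side and the target side. Because the isomorphism of Theorem \ref{thm1} intertwines the ${\rm SL}({\mathbb V})$--actions, the two local isomorphisms over $U_i$ and $U_j$ differ on the overlap by the same transition cocycle on source and target, so they agree as maps of the patched bundles. This is the analogue of the sentence in Proposition \ref{prop1} that the isomorphism \eqref{td} ``patch together compatibly'' because it intertwines the actions of ${\rm SL}({\mathbb V})$. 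I would make this explicit by writing the commuting square relating $\varphi_i^*$ and $\varphi_j^*$ of the Theorem \ref{thm1} isomorphism via $\tau_{j,i}$, and observing that equivariance is exactly the commutativity of that square.

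Once overlap compatibility is established, the local isomorphisms glue to a global holomorphic isomorphism on $M$, giving the two stated canonical isomorphisms and completing the deduction of the corollary from Theorem \ref{thm1}. The only subtlety worth flagging is that the auxiliary element $\omega\,\in\,\bigwedge^2{\mathbb V}$ from \eqref{e5} enters the identifications $\Psi$ and $\Phi_0$; but since $\omega$ is fixed by the ${\rm SL}({\mathbb V})$--action and is used uniformly across all charts, it does not obstruct the patching, and canonicity of the resulting global isomorphisms follows from the canonicity (up to this fixed $\omega$) of the identifications ${\mathbf L}\,=\,{\mathcal L}^*$ and ${\mathbf L}^{\otimes 2}\,=\, TM$ recorded in \eqref{l4}.
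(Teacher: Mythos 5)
Your proposal is correct and follows essentially the same route as the paper: the paper's own proof is precisely the observation that the ${\rm SL}({\mathbb V})$--equivariance of the isomorphisms in Theorem \ref{thm1} allows them to be patched through the charts of an ${\rm SL}({\mathbb V})$--projective atlas, exactly the mechanism of the proof of Proposition \ref{prop1} that you invoke and spell out. Your explicit treatment of the overlap compatibility and of the role of the fixed element $\omega\in\bigwedge^2{\mathbb V}$ is a faithful (and somewhat more detailed) rendering of what the paper leaves as ``straight-forward.''
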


\begin{remark}\label{rem1}
Note that Proposition \ref{prop1}, Corollary \ref{cor1} and \eqref{l4} together give a description
of any jet bundles of the form $J^k({\mathbf L}^a)\otimes J^\ell({\mathbf L}^b)$ as a direct sum of vector bundles
of the form ${\mathcal V}^p\otimes {\mathbf L}^q$.
\end{remark}

\subsection{Jets of vector bundles with a holomorphic connection}\label{se4.2}

Let $M$ be a compact connected Riemann surface. Let $E$ be a holomorphic vector bundle on $M$
equipped with a holomorphic connection $D_E$. Take a holomorphic vector bundle $W$ on $M$.
We will construct a holomorphic homomorphism from $J^k(E\otimes W)$ to $E\otimes J^k(W)$ for all
$k\, \geq\, 0$. To construct this homomorphism, let $f_1,\, f_2\, :\, M\times M\, \longrightarrow\,
M$ be the projections to the first and second factors respectively. The diagonal divisor
$$
\{(x,\, x)\, \mid\, x\, \in\, M\}\, \subset\, M\times M
$$
will be denoted by $\Delta_M$. Recall from \eqref{e1} that
\begin{equation}\label{jt}
J^k(E\otimes W)\, :=\, f_{1*}((f^*_2 (E\otimes W))/((f^*_2 (E\otimes W))\otimes
{\mathcal O}_{M\times M}(-(k+1)\Delta_M)))
\end{equation}
$$
=\, f_{1*}(((f^*_2 E)\otimes (f^*_2 W))/((f^*_2 E)
\otimes (f^*_2W)\otimes {\mathcal O}_{M\times M}(-(k+1)\Delta_M)))
\, \longrightarrow\, M\, .
$$
Consider the pulled back connection $f^*_2 D_E$ on $f^*_2E$.
Since the sheaf of holomorphic two-forms on $M$ is the zero sheaf, the connection $D_E$ is
flat (the curvature vanishes identically). Hence the connection $f^*_2D_E$ is also flat. 
Therefore, on any analytic neighborhood $U$ of $\Delta_M\,\subset\, M\times M$ that
admits a deformation retraction to $\Delta_M$, the two vector bundles 
$(p^*_1E)\vert_U$ and $(p^*_2E)\vert_U$ are identified using parallel translation.
Invoking this isomorphism between $(p^*_1E)\vert_U$ and $(p^*_2E)\vert_U$, from \eqref{jt} we have
$$
J^k(E\otimes W)\, =\, 
f_{1*}(((f^*_1 E)\otimes (f^*_2 W))/((f^*_1 E)
\otimes (f^*_2W)\otimes {\mathcal O}_{M\times M} (-(k+1)\Delta_M)))\, .
$$
Hence the projection formula gives that
\begin{equation}\label{jt2}
J^k(E\otimes W)\, =\, E\otimes f_{1*}((f^*_2 W)/((f^*_2W)\otimes
{\mathcal O}_{M\times M} (-(k+1)\Delta_M)))\, =\, E\otimes J^k(W)\, .
\end{equation}

\begin{lemma}\label{lem2}
Let $M$ be a connected Riemann surface equipped with a projective structure $P$. Let $E$ be a 
holomorphic vector bundle on $M$ equipped with a holomorphic connection $D_E$. If $n\, <\, 0$ or 
$n\, \geq\, k$, then there is a canonical isomorphism
$$
J^k(E\otimes {\mathbf L}^n)\,=\, E\otimes{\mathbf L}^{n-k}\otimes {\mathcal V}_k\, .
$$
If $k\, > \, n\, \geq\, 0$, then there is a canonical isomorphism
$$
J^k(E\otimes{\mathbf L}^n)\,=\, E\otimes ({\mathcal V}_n\oplus ({\mathbf L}^{-(k+1)}\otimes
{\mathcal V}_{k-n-1}))\, .
$$
\end{lemma}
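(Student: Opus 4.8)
The plan is to reduce this statement to Corollary \ref{cor1} by means of the canonical isomorphism established in equation \eqref{jt2}. First I would apply \eqref{jt2} with the holomorphic vector bundle $W$ taken to be ${\mathbf L}^n$. Since $E$ carries the holomorphic connection $D_E$, and since this connection is automatically flat on the Riemann surface $M$ (the sheaf of holomorphic two-forms on $M$ being the zero sheaf), parallel translation on an analytic neighborhood of the diagonal $\Delta_M\subset M\times M$ that deformation retracts onto $\Delta_M$ identifies $f^*_1 E$ with $f^*_2 E$; the projection formula then yields the canonical isomorphism
$$
J^k(E\otimes {\mathbf L}^n)\,=\, E\otimes J^k({\mathbf L}^n)\, .
$$

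Next I would substitute the description of $J^k({\mathbf L}^n)$ supplied by Corollary \ref{cor1}, treating the two ranges of $(n,k)$ separately. In the range $n\,<\,0$ or $n\,\geq\, k$, that corollary gives $J^k({\mathbf L}^n)\,=\,{\mathbf L}^{n-k}\otimes {\mathcal V}_k$, so tensoring with $E$ produces
$$
J^k(E\otimes {\mathbf L}^n)\,=\, E\otimes {\mathbf L}^{n-k}\otimes {\mathcal V}_k\, ,
$$
which is the first asserted isomorphism. In the remaining range $k\,>\, n\,\geq\, 0$, the corollary gives $J^k({\mathbf L}^n)\,=\,{\mathcal V}_n\oplus ({\mathbf L}^{-(k+1)}\otimes {\mathcal V}_{k-n-1})$, and since tensoring by $E$ commutes with direct sums we obtain
$$
J^k(E\otimes {\mathbf L}^n)\,=\, E\otimes ({\mathcal V}_n\oplus ({\mathbf L}^{-(k+1)}\otimes {\mathcal V}_{k-n-1}))\, ,
$$
the second asserted isomorphism.

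The substance of the argument has in fact already been carried out elsewhere: the canonical identification $J^k(E\otimes W)\,=\, E\otimes J^k(W)$ of \eqref{jt2} is exactly where the holomorphic connection on $E$ and the attendant flatness (enabling parallel transport near $\Delta_M$) are used, while the explicit ${\rm SL}({\mathbb V})$--equivariant decomposition of $J^k({\mathbf L}^n)$ is the content of Corollary \ref{cor1}, and hence of Theorem \ref{thm1}. What remains for this lemma is purely the combination of these two ingredients, so I do not anticipate any genuine obstacle. The only point deserving care is to confirm that the isomorphism \eqref{jt2} is canonical once $D_E$ has been fixed, so that the two displayed identifications above are well defined and independent of auxiliary choices.
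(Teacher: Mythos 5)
Your proposal is correct and is exactly the paper's own argument: the paper proves Lemma \ref{lem2} by citing \eqref{jt2} (the identification $J^k(E\otimes W)\,=\,E\otimes J^k(W)$ obtained via flatness of $D_E$ and the projection formula) together with Corollary \ref{cor1}, omitting the details you have spelled out. No discrepancy to report.
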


\begin{proof}
This follows from \eqref{jt2} and Corollary \ref{cor1}. We omit the details.
\end{proof}

\subsection{The symbol map}

As before, $M$ is a compact connected Riemann surface equipped with a projective structure $P$,
and $E$ is a holomorphic vector bundle over $M$ equipped with a holomorphic connection $D_E$.
Consider the projection
\begin{equation}\label{j3}
\phi_k \, :\, J^k(E\otimes{\mathbf L}^n)\, \longrightarrow\, J^{k-1}(E\otimes{\mathbf L}^n)
\end{equation}
in \eqref{e2}.
We will describe $\phi_k$ in terms of the isomorphisms in Lemma \ref{lem2}.

\subsubsection{Case where either $n < 0$ or $n \geq k$}

Assume that at least one of the following two conditions is valid:
\begin{enumerate}
\item $n \,<\, 0$, or

\item $n \,\geq\, k$.
\end{enumerate}
In view of Lemma \ref{lem2}, this implies that
\begin{equation}\label{j4}
J^k(E\otimes{\mathbf L}^n)\,=\, E\otimes{\mathbf L}^{n-k}\otimes {\mathcal V}_k\ \ \text{ and }\ \
J^{k-1}(E\otimes{\mathbf L}^n)\,=\, E\otimes{\mathbf L}^{n-k+1}\otimes {\mathcal V}_{k-1}
\end{equation}
when $k\, \geq\, 1$ (note that in \eqref{j3} we have $k\, \geq\, 1$). Using \eqref{j4}, the
projection $\phi_k$ in \eqref{j3} is a surjective homomorphism
\begin{equation}\label{j5}
\phi_k \, :\, E\otimes{\mathbf L}^{n-k}\otimes{\mathcal V}_k\, \longrightarrow\,
E\otimes{\mathbf L}^{n-k+1}\otimes {\mathcal V}_{k-1}\, .
\end{equation}

Recall that ${\mathcal V}_j\,=\, \text{Sym}^j({\mathcal V})$, and the line bundle $\mathbf L$
is a quotient of $\mathcal V$ (see \eqref{bl}). Therefore, we have a natural projection
\begin{equation}\label{vk}
\varpi_k\, :\, {\mathcal V}_k\, \longrightarrow\,{\mathbf L}\otimes {\mathcal V}_{k-1}\, .
\end{equation}
To describe $\varpi_k$ explicitly, let $q_{\mathbf L}\, :\, {\mathcal V}\, \longrightarrow\,
\mathbf L$ be the quotient map. Consider the homomorphism
$$
q_{\mathbf L}\otimes \text{Id}_{{\mathcal V}^{\otimes(k-1)}}\, :\, {\mathcal V}^{\otimes k}\,
\longrightarrow\,{\mathbf L}\otimes{\mathcal V}^{\otimes (k-1)}\, .
$$
Note that $q_{\mathbf L}\otimes \text{Id}_{{\mathcal V}^{\otimes(k-1)}}$ sends the subbundle
$\text{Sym}^k({\mathcal V})\, \subset\, {\mathcal V}^{\otimes k}$ to the subbundle
${\mathbf L}\otimes {\mathcal V}_{k-1}\, \subset\, {\mathbf L}\otimes{\mathcal V}^{\otimes (k-1)}$.
The homomorphism $\varpi_k$ in \eqref{vk} is this restriction of the homomorphism
$q_{\mathbf L}\otimes \text{Id}_{{\mathcal V}^{\otimes(k-1)}}$.

\begin{proposition}\label{prop5}
The homomorphism $\phi_k$ in \eqref{j5} coincides with ${\rm Id}\otimes\varpi_k$, where
${\rm Id}$ denotes the identity map of $E\otimes{\mathbf L}^{n-k}$, and $\varpi_k$ is
the homomorphism in \eqref{vk}.
\end{proposition}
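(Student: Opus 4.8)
The plan is to reduce Proposition \ref{prop5} to its already-established counterpart on the projective line, namely Proposition \ref{prop3} (for the case $n\,\geq\, k$) and Proposition \ref{prop2} (for the case $n\,<\,0$), by using the fact that both the isomorphisms of Lemma \ref{lem2} and the forgetful projection $\phi_k$ are local and patch together $\text{SL}({\mathbb V})$--equivariantly. The key structural input is the identification \eqref{jt2}, namely $J^k(E\otimes {\mathbf L}^n)\,=\, E\otimes J^k({\mathbf L}^n)$, obtained from the holomorphic connection $D_E$ via parallel transport near the diagonal. Under this identification, the forgetful homomorphism $\phi_k\,:\, J^k(E\otimes{\mathbf L}^n)\, \longrightarrow\, J^{k-1}(E\otimes{\mathbf L}^n)$ becomes $\text{Id}_E\otimes\phi_k$, where the second $\phi_k$ is the forgetful map $J^k({\mathbf L}^n)\, \longrightarrow\, J^{k-1}({\mathbf L}^n)$ for the line bundle ${\mathbf L}^n$ alone.

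Granting this, the proof proceeds in two steps. First I would verify that the natural forgetful projection is compatible with the factorization \eqref{jt2}: the projection $\phi_k$ of \eqref{e2} is defined purely from the filtration of $f_2^*(E\otimes{\mathbf L}^n)$ by powers of the ideal sheaf of $\Delta_M$, and parallel transport for $f_2^* D_E$ identifies these filtration steps for $E\otimes{\mathbf L}^n$ with $E$ tensored with the corresponding steps for ${\mathbf L}^n$; hence $\phi_k$ for $E\otimes{\mathbf L}^n$ is carried to $\text{Id}_E\otimes\phi_k$ for ${\mathbf L}^n$. Second, I would invoke Corollary \ref{cor1}, which reduces to Proposition \ref{prop3} (case $n\,\geq\,k$) and Proposition \ref{prop2} (case $n\,<\,0$): there the forgetful map $J^k({\mathbf L}^n)\,=\,{\mathbf L}^{n-k}\otimes{\mathcal V}_k\, \longrightarrow\, J^{k-1}({\mathbf L}^n)\,=\, {\mathbf L}^{n-k+1}\otimes{\mathcal V}_{k-1}$ is exactly $\text{Id}_{{\mathbf L}^{n-k}}\otimes\varpi_k^0$, the Riemann surface analogue of \eqref{a1p} and \eqref{a1}. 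Tensoring the whole identification with $\text{Id}_E$ and unravelling \eqref{j4}, the map $\phi_k$ in \eqref{j5} becomes $\text{Id}_{E\otimes{\mathbf L}^{n-k}}\otimes\varpi_k$, which is precisely the assertion.

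Concretely, once the factorization $\phi_k\,=\,\text{Id}_E\otimes\phi_k^{{\mathbf L}}$ is in place, I would record the chain of equalities, for the case $n\,\geq\, k$,
$$
\phi_k\,=\,\text{Id}_E\otimes\big(\text{Id}_{{\mathbf L}^{n-k}}\otimes\varpi_k\big)\,=\,\text{Id}_{E\otimes{\mathbf L}^{n-k}}\otimes\varpi_k\, ,
$$
using that $\varpi_k$ in \eqref{vk} is the Riemann surface version of $\varpi_k^0$ in \eqref{vk0} (built from the same quotient $q_{\mathbf L}\,:\, {\mathcal V}\, \longrightarrow\, {\mathbf L}$ that corresponds, chart by chart, to $q_L\,:\, {\mathbb V}_1\, \longrightarrow\, L$). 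The case $n\,<\,0$ is identical, using Proposition \ref{prop2} in place of Proposition \ref{prop3}.

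The main obstacle is the first step: one must check carefully that the parallel-transport identification \eqref{jt2} is genuinely compatible with the forgetful filtration defining $\phi_k$. This requires observing that $\phi_k$ depends only on the ideal-sheaf filtration of $f_2^*(E\otimes{\mathbf L}^n)$ along $\Delta_M$, while the isomorphism of \eqref{jt2} replaces $f_2^* E$ by $f_1^* E$ through a flat, hence $\Delta_M$--filtration-preserving, identification; since $f_1^* E$ is pulled back from the first factor, tensoring by it commutes with $f_{1*}$ and with passage to the quotient by ${\mathcal O}_{M\times M}(-(k+1)\Delta_M)$, so the diagram relating $J^k$ and $J^{k-1}$ factors as $\text{Id}_E$ tensored with the line-bundle forgetful map. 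Once this compatibility is secured, the remainder is purely formal, being an application of the already-proved Proposition \ref{prop3} and Proposition \ref{prop2} together with the $\text{SL}({\mathbb V})$--equivariant patching that underlies Corollary \ref{cor1} and Lemma \ref{lem2}.
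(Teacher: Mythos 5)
Your proof is correct and takes essentially the same route as the paper: the paper's entire proof is the one-line reduction ``This follows from Proposition \ref{prop2} and Proposition \ref{prop3}.'' You have merely made explicit the two ingredients the paper leaves implicit, namely that the parallel-transport identification \eqref{jt2} intertwines the forgetful projections (so that $\phi_k$ becomes ${\rm Id}_E\otimes\phi_k^{\mathbf L}$) and that the chart-by-chart statements of Propositions \ref{prop2} and \ref{prop3} patch ${\rm SL}({\mathbb V})$--equivariantly to give the global statement.
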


\begin{proof}
This follows from Proposition \ref{prop2} and Proposition \ref{prop3}.
\end{proof}

Consider the inclusion homomorphism $\iota_{\mathcal L}\, :\, {\mathcal L}\,\hookrightarrow\,
\mathcal V$ (see \eqref{bl}). We have the subbundle
$$
\iota'\, :=\, \text{Id}_E\otimes{\iota}^{\otimes k}_{\mathcal L}\, :\, 
E\otimes{\mathcal L}^{\otimes k}
\,\hookrightarrow\, E\otimes\text{Sym}^k({\mathcal V})\, =\, E\otimes{\mathcal V}_k\, .
$$
Using the isomorphism ${\mathbf L}\,=\, {\mathcal L}^*$ in \eqref{l4}, the above homomorphism
$\iota'$ produces a homomorphism
\begin{equation}\label{sb}
\widetilde{\iota}\, :=\, \iota'\otimes\text{Id}_{{\mathbf L}^{n-k}}\, :\,
E\otimes {\mathbf L}^{n-2k}\,=\, E\otimes{\mathcal L}^{\otimes k}\otimes {\mathbf L}^{n-k}
\,\hookrightarrow\, E\otimes{\mathcal V}_k\otimes{\mathbf L}^{n-k}\, .
\end{equation}

The following is an immediate consequence of Proposition \ref{prop5}.

\begin{corollary}\label{cor2}
The kernel of the surjective homomorphism $\phi_k$ in \eqref{j5} coincides with the
subbundle $E\otimes{\mathbf L}^{n-2k}\,\hookrightarrow\,
E\otimes{\mathcal V}_k\otimes{\mathbf L}^{n-k}$ given by the image
of the homomorphism $\widetilde{\iota}$ in \eqref{sb}.
\end{corollary}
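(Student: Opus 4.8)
The plan is to reduce everything to the identification of the kernel of the single symmetric-power contraction $\varpi_k$ from \eqref{vk}, and then tensor back. By Proposition \ref{prop5}, the homomorphism $\phi_k$ in \eqref{j5} is $\text{Id}_{E\otimes{\mathbf L}^{n-k}}\otimes\varpi_k$, so its kernel is $E\otimes{\mathbf L}^{n-k}\otimes\text{kernel}(\varpi_k)$, since tensoring a homomorphism of holomorphic vector bundles with a fixed vector bundle preserves both its image and its kernel up to this tensor factor. Hence it suffices to show that $\text{kernel}(\varpi_k)\,=\,{\mathcal L}^{\otimes k}$, realized as a line subbundle of ${\mathcal V}_k\,=\,\text{Sym}^k({\mathcal V})$ through the symmetric embedding $\iota^{\otimes k}_{\mathcal L}$.

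First I would record that $\varpi_k$ is surjective. Indeed $\phi_k$ in \eqref{j5} is surjective, being the forgetful jet projection from \eqref{e2}; since tensoring with the locally free sheaf $E\otimes{\mathbf L}^{n-k}$ is exact, $\varpi_k$ is surjective as well. As $\varpi_k\,:\,{\mathcal V}_k\,\longrightarrow\,{\mathbf L}\otimes{\mathcal V}_{k-1}$ is then a fiberwise surjection between bundles of ranks $k+1$ and $k$, its kernel is a holomorphic line subbundle of ${\mathcal V}_k$.

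Next I would show that this line subbundle is exactly ${\mathcal L}^{\otimes k}$. By the explicit description preceding \eqref{vk}, the map $\varpi_k$ is the restriction to $\text{Sym}^k({\mathcal V})$ of $q_{\mathbf L}\otimes\text{Id}_{{\mathcal V}^{\otimes(k-1)}}$, where $q_{\mathbf L}\,:\,{\mathcal V}\,\longrightarrow\,{\mathbf L}$ is the quotient in \eqref{bl} whose kernel is ${\mathcal L}$. Composing with $\iota^{\otimes k}_{\mathcal L}\,:\,{\mathcal L}^{\otimes k}\,\hookrightarrow\,\text{Sym}^k({\mathcal V})$ sends a local generator to the symmetrization of $e^{\otimes k}$, where $e$ is a local section of ${\mathcal L}$; applying $q_{\mathbf L}$ to any one slot gives $q_{\mathbf L}(e)=0$, so $\varpi_k\circ\iota^{\otimes k}_{\mathcal L}=0$ and thus ${\mathcal L}^{\otimes k}\subseteq\text{kernel}(\varpi_k)$. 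Both being line subbundles of ${\mathcal V}_k$, this inclusion is an equality.

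Finally, combining the two reductions gives $\text{kernel}(\phi_k)\,=\,E\otimes{\mathbf L}^{n-k}\otimes{\mathcal L}^{\otimes k}$. Using the canonical isomorphism ${\mathbf L}\,=\,{\mathcal L}^*$ from \eqref{l4}, one has ${\mathcal L}^{\otimes k}\,=\,{\mathbf L}^{-k}$, whence $\text{kernel}(\phi_k)\,=\,E\otimes{\mathbf L}^{n-2k}$, and this identification is visibly the one carried out by $\widetilde{\iota}$ in \eqref{sb}. The only delicate point is promoting the inclusion ${\mathcal L}^{\otimes k}\subseteq\text{kernel}(\varpi_k)$ to an equality, which I would settle by the rank count above rather than by a direct computation; one could alternatively verify it pointwise in a frame $\{e,\,f\}$ of ${\mathcal V}$ adapted to ${\mathcal L}$, where $\varpi_k(e^{k-j}f^{j})$ is proportional to $j$ and hence vanishes precisely when $j=0$.
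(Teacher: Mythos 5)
Your proof is correct and takes essentially the same route as the paper: there, Corollary \ref{cor2} is declared an immediate consequence of Proposition \ref{prop5}, and your argument is precisely that deduction spelled out in full (reduce to $\ker(\varpi_k)$ by exactness of tensoring with $E\otimes{\mathbf L}^{n-k}$, show $\varpi_k\circ\iota_{\mathcal L}^{\otimes k}=0$ since $q_{\mathbf L}$ kills ${\mathcal L}$, and promote the inclusion to an equality by the rank count, then identify ${\mathcal L}^{\otimes k}\otimes{\mathbf L}^{n-k}$ with ${\mathbf L}^{n-2k}$ via \eqref{l4}). Nothing needs correcting.
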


Let $F$ be any holomorphic vector bundle on $M$. Using \eqref{dod} and \eqref{j4}, we have
\begin{equation}\label{sb2}
\text{Diff}^k_M(E\otimes{\mathbf L}^n,\, F)\,=\, F\otimes (E\otimes
{\mathbf L}^{n-k}\otimes {\mathcal V}_k)^*
\,=\, F\otimes {\mathbf L}^{k-n}\otimes (E\otimes {\mathcal V}_k)^*\, .
\end{equation}
Using the homomorphism $\widetilde{\iota}$ in \eqref{sb} construct the homomorphism
\begin{equation}\label{sb1}
\text{Id}_{F}\otimes\widetilde{\iota}^*\, :\, 
F\otimes (E\otimes{\mathcal V}_k\otimes{\mathbf L}^{n-k})^* \, \longrightarrow\,
F\otimes (E\otimes {\mathbf L}^{n-2k})^*\,=\,
\text{Hom}(E,\, F)\otimes {\mathbf L}^{2k-n}\, .
\end{equation}
Combining \eqref{sb2} and \eqref{sb1}, we have a surjective homomorphism
$$
\sigma_k\, :\, \text{Diff}^k_M(E\otimes {\mathbf L}^n,\, F)\, \longrightarrow\,
\text{Hom}(E,\, F)\otimes {\mathbf L}^{2k-n}\, .
$$

The following is an immediate consequence of Corollary \ref{cor2}.

\begin{corollary}\label{cor3}
The symbol homomorphism
$$
{\rm Diff}^k_M(E\otimes {\mathbf L}^n,\, F)\, \longrightarrow\,
{\rm Hom}(E\otimes {\mathbf L}^n,\, F)\otimes (TM)^{\otimes k}
$$
in \eqref{e2p} coincides with the above homomorphism $\sigma_k$ after $TM$
is identified with ${\mathbf L}^{\otimes 2}$ as in \eqref{l4}.
\end{corollary}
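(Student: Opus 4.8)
The plan is to recognize both homomorphisms as duals of one and the same inclusion into the jet bundle, so that the corollary becomes a dualization of the identification already furnished by Corollary \ref{cor2}. By its definition in \eqref{e2p}, the symbol homomorphism is $\mathrm{Id}_F\otimes\iota^{*}$, the dual of the canonical inclusion $\iota\colon (E\otimes{\mathbf L}^n)\otimes K_M^{\otimes k}\hookrightarrow J^k(E\otimes{\mathbf L}^n)$ appearing in the jet sequence \eqref{e2}. On the other hand $\sigma_k$ was constructed by combining \eqref{sb2} and \eqref{sb1} as $\mathrm{Id}_F\otimes\widetilde{\iota}^{*}$, the dual of the inclusion $\widetilde{\iota}$ of \eqref{sb}, read through the identification $J^k(E\otimes{\mathbf L}^n)=E\otimes{\mathbf L}^{n-k}\otimes{\mathcal V}_k$ of Lemma \ref{lem2}. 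Thus I would first reduce the corollary to the single assertion that, under this identification together with the identification $K_M={\mathbf L}^{-2}$ coming from \eqref{l4}, the inclusions $\iota$ and $\widetilde{\iota}$ agree; taking duals and tensoring with $\mathrm{Id}_F$ then gives the statement.

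Next I would match the two sides formally. Using $TM={\mathbf L}^{\otimes 2}$, and hence $K_M^{\otimes k}={\mathbf L}^{-2k}$, the source of $\iota$ becomes $(E\otimes{\mathbf L}^n)\otimes K_M^{\otimes k}=E\otimes{\mathbf L}^{n-2k}$, which is exactly the source of $\widetilde{\iota}$ in \eqref{sb}; likewise the target $(TM)^{\otimes k}\otimes\mathrm{Hom}(E\otimes{\mathbf L}^n,\,F)$ of the symbol map becomes $\mathrm{Hom}(E,\,F)\otimes{\mathbf L}^{2k-n}$, the target of $\sigma_k$. So once \eqref{l4} is invoked the two homomorphisms have literally the same source and target, and it remains only to compare the underlying inclusions into $J^k(E\otimes{\mathbf L}^n)$.

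The equality of those inclusions is where Corollary \ref{cor2} does the work, and it is the only point I expect to require care. By exactness of \eqref{e2} the image of $\iota$ is precisely $\ker\phi_k$, and Corollary \ref{cor2} (itself a consequence of Proposition \ref{prop5}) identifies $\ker\phi_k$ with the image of $\widetilde{\iota}$; hence $\iota$ and $\widetilde{\iota}$ have the same image. The genuine obstacle is to upgrade this to an equality of \emph{maps} rather than of subbundles, i.e. to rule out that $\iota$ and $\widetilde{\iota}$ differ by a nontrivial scalar automorphism of $E\otimes{\mathbf L}^{n-2k}$. I would settle this by observing that both inclusions are $\mathrm{Id}_E$ tensored with a construction built from the single piece of data used throughout, namely the Poincar\'e adjunction isomorphism \eqref{e0} for $k=1$ (equivalently $\Phi_0$ of \eqref{e-3} and the element $\omega$ of \eqref{e5}): the map $\iota$ comes directly from \eqref{e0}, whereas the identifications defining $\widetilde{\iota}$, namely ${\mathbf L}={\mathcal L}^{*}$ and $TM={\mathbf L}^{\otimes 2}$ in \eqref{l4}, descend from that same $\omega$ via $\Psi$ and $\Phi_0$. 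Since every normalization traces back to the same element, no spurious scalar can appear and the two inclusions coincide on the nose. Dualizing $\iota=\widetilde{\iota}$ and tensoring with $\mathrm{Id}_F$ then shows that the symbol map of \eqref{e2p} equals $\sigma_k$ after $TM$ is identified with ${\mathbf L}^{\otimes 2}$, which is exactly the assertion.
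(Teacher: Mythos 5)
Your proposal is correct and follows essentially the same route as the paper: the paper's entire proof of Corollary \ref{cor3} is that it is an immediate consequence of Corollary \ref{cor2}, which is precisely your reduction — the symbol map of \eqref{e2p} and $\sigma_k$ are the duals of the inclusions $\iota$ and $\widetilde{\iota}$, whose images in $J^k(E\otimes{\mathbf L}^n)$ coincide by Corollary \ref{cor2}. Your extra step upgrading equality of images to equality of maps (ruling out a scalar discrepancy by tracing all normalizations back to the single element $\omega$ of \eqref{e5}) is a point the paper passes over in silence, so if anything you are more careful than the source, though that final normalization argument is stated as a plausibility claim rather than verified by an explicit computation.
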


\subsubsection{Case where $k > n \geq 0$}

We first assume that $k-1 \,>\, n \,\geq\, 0$. Therefore, from Lemma \ref{lem2} we have
$J^k(E\otimes{\mathbf L}^n)\,=\, E\otimes ({\mathcal V}_n\oplus ({\mathbf L}^{-(k+1)}\otimes
{\mathcal V}_{k-n-1}))$ and
$$
J^{k-1}(E\otimes{\mathbf L}^n)\,=\, E\otimes ({\mathcal V}_n\oplus
({\mathbf L}^{-k}\otimes {\mathcal V}_{k-n-2}))\, .
$$
Let
\begin{equation}\label{j6}
\phi_k\, :\, E\otimes ({\mathcal V}_n\oplus ({\mathbf L}^{-(k+1)}\otimes
{\mathcal V}_{k-n-1}))\,=\, J^k(E\otimes{\mathbf L}^n)
\end{equation}
$$
\longrightarrow\,
J^{k-1}(E\otimes{\mathbf L}^n)\,=\, E\otimes ({\mathcal V}_n\oplus ({\mathbf L}^{-k}\otimes
{\mathcal V}_{k-n-2}))
$$
be the projection in \eqref{e2}. Just as the homomorphism ${\rm Id}\otimes\varpi_k$
in Proposition \ref{prop5}, we have a homomorphism
$$
\widehat{\varpi}_k\, :\, E\otimes {\mathbf L}^{-(k+1)}\otimes
{\mathcal V}_{k-n-1}\, \longrightarrow\,E\otimes {\mathbf L}^{-k}\otimes
{\mathcal V}_{k-n-2}\, .
$$

\begin{lemma}\label{lem3}
The homomorphism $\phi_k$ in \eqref{j6} coincides with the homomorphism
$${\rm Id}_{E\otimes{\mathcal V}_n}\oplus \widehat{\varpi}_k
\, :\, E\otimes ({\mathcal V}_n\oplus ({\mathbf L}^{-(k+1)}\otimes
{\mathcal V}_{k-n-1}))\, \longrightarrow\,
E\otimes ({\mathcal V}_n\oplus ({\mathbf L}^{-k}\otimes
{\mathcal V}_{k-n-2}))\, .
$$
\end{lemma}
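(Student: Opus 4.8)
The plan is to mimic the proof of Proposition~\ref{prop5}, deducing the assertion from its counterpart on $X\,=\,{\mathbb P}({\mathbb V})$, namely Proposition~\ref{prop4}, and then absorbing the auxiliary bundle $E$ through the isomorphism \eqref{jt2}. First I would reduce to the case in which $E$ is the trivial line bundle with its trivial connection. The essential point is that the isomorphism $J^k(E\otimes W)\,=\, E\otimes J^k(W)$ of \eqref{jt2} is compatible with the forgetful homomorphisms $\phi_k$ of \eqref{e2}. Indeed, \eqref{jt2} is induced by a single parallel-translation identification of $(f^*_1E)\vert_U$ with $(f^*_2E)\vert_U$ over a neighborhood $U$ of $\Delta_M$ that is independent of $k$, whereas $\phi_k$ is induced purely by the sheaf inclusion ${\mathcal O}_{M\times M}(-(k+1)\Delta_M)\,\subset\, {\mathcal O}_{M\times M}(-k\Delta_M)$. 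Consequently the square comparing $\phi_k$ on $J^\bullet(E\otimes W)$ with $\text{Id}_E\otimes\phi_k$ on $E\otimes J^\bullet(W)$ commutes, and it suffices to describe the forgetful map $J^k({\mathbf L}^n)\,\longrightarrow\, J^{k-1}({\mathbf L}^n)$ in the range $k-1\,>\,n\,\geq\, 0$.

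Second, I would transport Proposition~\ref{prop4} from $X$ to $M$. All the isomorphisms of Theorem~\ref{thm1} and all the forgetful homomorphisms are ${\rm SL}({\mathbb V})$--equivariant; hence, exactly as Corollary~\ref{cor1} is obtained from Theorem~\ref{thm1}, the local model supplied by Proposition~\ref{prop4} patches across the charts of a projective atlas $\{(U_i,\, \varphi_i)\}$. This shows that, under the decompositions of Corollary~\ref{cor1}, the projection $J^k({\mathbf L}^n)\,\longrightarrow\, J^{k-1}({\mathbf L}^n)$ is the direct sum of $\text{Id}_{{\mathcal V}_n}$ and the natural contraction ${\mathbf L}^{-(k+1)}\otimes{\mathcal V}_{k-n-1}\,\longrightarrow\,{\mathbf L}^{-k}\otimes{\mathcal V}_{k-n-2}$. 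Tensoring with $E$ and invoking the compatibility established in the first step then yields $\phi_k\,=\,\text{Id}_{E\otimes{\mathcal V}_n}\oplus\widehat{\varpi}_k$, as asserted.

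The step I expect to be the main obstacle is the claim, underlying the first paragraph, that the two decompositions furnished by Corollary~\ref{cor1} for $J^k$ and for $J^{k-1}$ are genuinely nested by $\phi_k$, so that the common ${\mathcal V}_n$--summand is carried identically. This rests on the observation that the splitting of Lemma~\ref{lem1} is defined through the forgetful projection $\beta\,:\, J^k(L^n)\,\longrightarrow\, J^n(L^n)\,=\,{\mathbb V}_n$ of \eqref{e18}, and that these projections are compatible along the tower $J^k\,\longrightarrow\, J^{k-1}\,\longrightarrow\,\cdots\,\longrightarrow\, J^n$; thus $\phi_k$ restricts to the identity on the shared ${\mathbb V}_n$--factor, while its behaviour on the complementary summand is pinned down to be the contraction by the rank count already recorded in Proposition~\ref{prop4}. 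Once this nesting is secured, the remaining verifications are the routine local computations entirely parallel to those carried out in the proofs of Proposition~\ref{prop2} and Proposition~\ref{prop3}.
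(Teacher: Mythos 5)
Your proposal is correct and takes essentially the same approach as the paper: the paper's proof of this lemma is the single sentence ``This follows from Proposition \ref{prop4},'' and your argument simply makes explicit the two ingredients that citation relies on, namely the compatibility of the isomorphism \eqref{jt2} with the forgetful homomorphisms and the ${\rm SL}({\mathbb V})$--equivariant patching of Proposition \ref{prop4} over a projective atlas. The nesting of the two decompositions that you flag as the main obstacle is already part of the statement of Proposition \ref{prop4}, so citing it disposes of that point as well.
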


\begin{proof}
This follows from Proposition \ref{prop4}.
\end{proof}

Note that using \eqref{l4},
$$\text{kernel}(\widehat{\varpi}_k)\,=\,E\otimes {\mathbf L}^{-(k+1)}\otimes
{\mathcal L}^{\otimes (k-n-1)}\,=\, E\otimes K^{\otimes k}_M{\mathbf L}^{n}\, .$$

The following is an immediate consequence of Lemma \ref{lem3}.

\begin{corollary}\label{cor5}
Let $F$ be any holomorphic vector bundle on $M$. The symbol homomorphism
$$
{\rm Diff}^k_M(E\otimes {\mathbf L}^n,\, F)\, \longrightarrow\,
{\rm Hom}(E\otimes {\mathbf L}^n,\, F)\otimes (TM)^{\otimes k}
$$
coincides with ${\rm Id}_F\otimes \iota^*$, where $\iota\, :\,
{\rm kernel}(\widehat{\varpi}_k)\,\hookrightarrow\, E\otimes {\mathbf L}^{-(k+1)}\otimes
{\mathcal V}_{k-n-1}$ is the inclusion map.
\end{corollary}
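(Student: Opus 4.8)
The plan is to unwind the definition of the symbol map and match it against the explicit form of $\phi_k$ provided by Lemma \ref{lem3}. Write $\iota_0\,:\, (E\otimes{\mathbf L}^n)\otimes K^{\otimes k}_M\,\hookrightarrow\, J^k(E\otimes{\mathbf L}^n)$ for the canonical inclusion in the jet exact sequence \eqref{e2} (taken with $W\,=\, E\otimes{\mathbf L}^n$). By \eqref{e2p}, applied with $V\,=\, F$, the symbol homomorphism is by definition ${\rm Id}_F\otimes\iota_0^*$, so the entire task is to identify $\iota_0$ with the inclusion $\iota$ appearing in the statement. First I would record the bundle identifications. From $TM\,=\, {\mathbf L}^{\otimes 2}$ in \eqref{l4} we get $K^{\otimes k}_M\,=\, {\mathbf L}^{-2k}$, whence $(E\otimes{\mathbf L}^n)\otimes K^{\otimes k}_M\,=\, E\otimes{\mathbf L}^{n-2k}$, and by the computation displayed just before the statement this is exactly ${\rm kernel}(\widehat{\varpi}_k)$. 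Dually, $F\otimes((E\otimes{\mathbf L}^n)\otimes K^{\otimes k}_M)^*\,=\, {\rm Hom}(E\otimes{\mathbf L}^n,\, F)\otimes (TM)^{\otimes k}$, and this equals $F\otimes({\rm kernel}(\widehat{\varpi}_k))^*$, so the two descriptions of the target of the symbol map agree.

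Next I would invoke exactness of \eqref{e2}, which gives ${\rm image}(\iota_0)\,=\, {\rm kernel}(\phi_k)$. By Lemma \ref{lem3}, under the decomposition of Lemma \ref{lem2} the projection $\phi_k$ is ${\rm Id}_{E\otimes{\mathcal V}_n}\oplus\widehat{\varpi}_k$; in the boundary case $k\,=\, n+1$ one instead uses the second assertion of Proposition \ref{prop4}, where the component of $\phi_k$ on the second summand is the zero map and ${\rm kernel}(\widehat{\varpi}_k)$ is that entire summand. In either situation ${\rm kernel}(\phi_k)$ is precisely ${\rm kernel}(\widehat{\varpi}_k)$, realized as a subbundle of the second summand $E\otimes{\mathbf L}^{-(k+1)}\otimes{\mathcal V}_{k-n-1}$ of $J^k(E\otimes{\mathbf L}^n)$. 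Consequently $\iota_0$ is the composite of the inclusion $\iota$ of the statement with the inclusion of this summand into $J^k(E\otimes{\mathbf L}^n)$. Dualizing, $\iota_0^*\,=\, \iota^*$ as maps out of $J^k(E\otimes{\mathbf L}^n)^*$ (here ${\rm Id}_F\otimes\iota^*$ is read as first projecting $J^k(E\otimes{\mathbf L}^n)^*$ onto the dual of the second summand and then applying $\iota^*$), and the corollary follows upon tensoring with ${\rm Id}_F$.

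The step that requires the most care, and the only genuine content beyond bookkeeping, is to check that $\iota_0$ and $\iota$ agree as maps, not merely that their images coincide as subsheaves. Both are inclusions of a subbundle of rank ${\rm rank}(E)$ with the same image, so they differ by an automorphism of ${\rm kernel}(\widehat{\varpi}_k)$, and the claim is that under the natural identification $(E\otimes{\mathbf L}^n)\otimes K^{\otimes k}_M\,=\, {\rm kernel}(\widehat{\varpi}_k)$ recorded above this automorphism is the identity. This is guaranteed by the explicit description of $\phi_k$ together with the Poincar\'e-adjunction normalization of the isomorphism \eqref{e0} used to define $\iota_0$; once this compatibility is verified the statement is, as the preceding discussion indicates, a formal consequence of Lemma \ref{lem3}.
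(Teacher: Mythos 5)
Your proposal is correct and takes essentially the same route as the paper: Corollary \ref{cor5} is stated there as an immediate consequence of Lemma \ref{lem3}, and your argument is exactly that deduction made explicit (the symbol is by definition ${\rm Id}_F\otimes\iota_0^*$ for the inclusion $\iota_0$ in \eqref{e2}, and exactness together with Lemma \ref{lem3} identifies ${\rm image}(\iota_0)\,=\,{\rm kernel}(\phi_k)\,=\,{\rm kernel}(\widehat{\varpi}_k)$, after which one dualizes). The normalization point you flag at the end, that $\iota_0$ and $\iota$ agree as maps under the identification ${\rm kernel}(\widehat{\varpi}_k)\,=\,E\otimes K^{\otimes k}_M\otimes{\mathbf L}^{n}$ and not merely in image, is left implicit in the paper as well, so your treatment is, if anything, slightly more careful.
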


We now assume that $k-1 \,=\, n \,\geq\, 0$.

Under this assumption, from Lemma \ref{lem2} we have
$$
J^k(E\otimes{\mathbf L}^n)\,=\, E\otimes ({\mathcal V}_n\oplus {\mathbf L}^{-(k+1)}) \ \ 
\text{ and }\ \ J^{k-1}(E\otimes{\mathbf L}^n)\,=\, 
E\otimes{\mathcal V}_n\, .
$$
Let
\begin{equation}\label{j6a}
\phi_k\, :\, E\otimes ({\mathcal V}_n\oplus {\mathbf L}^{-(k+1)})\,=\,
J^k(E\otimes{\mathbf L}^n)\, \longrightarrow\,
J^{k-1}(E\otimes{\mathbf L}^n)\,=\, E\otimes{\mathcal V}_n 
\end{equation}
be the projection in \eqref{e2}.

\begin{lemma}\label{lem4}
The homomorphism $\phi_k$ in \eqref{j6a} coincides with the projection
$$
E\otimes ({\mathcal V}_n\oplus {\mathbf L}^{-(k+1)})\, \longrightarrow\,
E\otimes{\mathcal V}_n 
$$
to the first factor.
\end{lemma}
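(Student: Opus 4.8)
The plan is to obtain Lemma \ref{lem4} as the Riemann-surface translation of the second clause of Proposition \ref{prop4}, exactly as Lemma \ref{lem3} was deduced from its first clause. The entire argument rests on the $\text{SL}(\mathbb{V})$-equivariance of the forgetful homomorphism $\phi_k\colon J^k(L^n)\to J^{k-1}(L^n)$ on $X=\mathbb{P}(\mathbb{V})$, together with the way a projective structure $P$ glues the local models across charts.

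First I would record the shapes of the two jet bundles in this edge case. Since $k-1=n$, the summand $\mathbf{L}^{-(k+1)}\otimes\mathcal{V}_{k-n-1}$ appearing in Corollary \ref{cor1} collapses: here $k-n-1=0$, so $\mathcal{V}_{k-n-1}=\mathcal{V}_0=\mathcal{O}_M$ and the second summand is just $\mathbf{L}^{-(k+1)}$. On the other hand $J^{k-1}(\mathbf{L}^n)=J^n(\mathbf{L}^n)$ falls under the case $n\geq k-1$ of Lemma \ref{lem2}, whence $J^{k-1}(\mathbf{L}^n)=E\otimes\mathcal{V}_n$. This reproduces the two identifications already used in \eqref{j6a}.

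Next I would invoke the local-to-global mechanism. On each chart $\varphi_i\colon U_i\to X$ the bundles $\mathbf{L}^n$, $\mathcal{V}_n$ and their jet bundles are pullbacks under $\varphi_i$ of $L^n$, $\mathbb{V}_n$ and $J^k(L^n)$, and the forgetful map $\phi_k$ of \eqref{e2} pulls back to $\varphi_i^*\phi_k$. The second clause of Proposition \ref{prop4} describes this map on $X$ as $\text{Id}_{\mathbb{V}_n}\oplus 0$, that is, the projection to the first factor that kills $L^{-(k+1)}$. Because this description is $\text{SL}(\mathbb{V})$-equivariant, the local descriptions agree over the overlaps $U_i\cap U_j$, where the transition cocycle $\{\tau_{i,j}\}$ acts, and hence patch to the asserted global projection. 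The tensoring by $E$ is then handled exactly as before: by \eqref{jt2} the flat connection $D_E$ yields $J^k(E\otimes\mathbf{L}^n)=E\otimes J^k(\mathbf{L}^n)$ compatibly with $\phi_k$, so $\phi_k$ becomes $\text{Id}_E\otimes(\text{Id}_{\mathcal{V}_n}\oplus 0)$, which is precisely the projection onto $E\otimes\mathcal{V}_n$.

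The only point needing care, and it is bookkeeping rather than a genuine obstacle, is the degeneration of the generic contraction map of Lemma \ref{lem3} into the zero map here: the target summand $\mathbf{L}^{-k}\otimes\mathcal{V}_{k-n-2}$ would involve $\mathcal{V}_{k-n-2}=\mathcal{V}_{-1}=0$, so it disappears and the contraction necessarily lands in the zero bundle. This is exactly what the second clause of Proposition \ref{prop4} encodes on $X$, so no separate analytic verification beyond tracking the vanishing symmetric powers is required.
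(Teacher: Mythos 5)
Your proposal is correct and follows the same route as the paper: the paper's proof of Lemma \ref{lem4} is precisely the one-line deduction from the second clause of Proposition \ref{prop4}, with the gluing via the ${\rm SL}({\mathbb V})$-equivariance of the local model and the identification $J^k(E\otimes{\mathbf L}^n)\,=\,E\otimes J^k({\mathbf L}^n)$ from \eqref{jt2} that you spell out explicitly. The only blemish is a typo ($J^{k-1}({\mathbf L}^n)\,=\,E\otimes{\mathcal V}_n$ should read $J^{k-1}(E\otimes{\mathbf L}^n)\,=\,E\otimes{\mathcal V}_n$), which does not affect the argument.
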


\begin{proof}
This follows from Proposition \ref{prop4}.
\end{proof}

The following is an immediate consequence of Lemma \ref{lem4}.

\begin{corollary}\label{cor6}
Let $F$ be any holomorphic vector bundle on $M$. The symbol homomorphism
$$
{\rm Diff}^k_M(E\otimes {\mathbf L}^n,\, F)\, \longrightarrow\,
{\rm Hom}(E\otimes {\mathbf L}^n,\, F)\otimes (TM)^{\otimes k}
$$
coincides with ${\rm Id}_F\otimes p'$, where
$$p'\, :\, E^*\otimes ({\mathcal V}_n\oplus {\mathbf L}^{-(k+1)})^*\,\longrightarrow\,
E^*\otimes ({\mathbf L}^{-(k+1)})^*
$$
is the natural projection.
\end{corollary}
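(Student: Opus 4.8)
The plan is to read off the symbol map directly from its definition, once the projection $\phi_k$ has been identified by Lemma~\ref{lem4}. Recall from \eqref{e2} and \eqref{e2p} that, for $W\,=\,E\otimes{\mathbf L}^n$, the symbol homomorphism is ${\rm Id}_F\otimes\iota^*$, where $\iota\,:\,W\otimes K^{\otimes k}_M\,\hookrightarrow\,J^k(W)$ is the inclusion appearing in the short exact sequence \eqref{e2}, so that the image of $\iota$ is exactly the kernel of $\phi_k$. Thus the whole task reduces to describing $\iota$ explicitly in the present case $k-1\,=\,n\,\geq\,0$ and then dualizing.

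First I would invoke Lemma~\ref{lem4}: under the isomorphism $J^k(E\otimes{\mathbf L}^n)\,=\,E\otimes({\mathcal V}_n\oplus{\mathbf L}^{-(k+1)})$, the projection $\phi_k$ of \eqref{j6a} is the projection onto the first factor $E\otimes{\mathcal V}_n$. Hence the kernel of $\phi_k$, equal to the image of $\iota$, is the second summand $E\otimes{\mathbf L}^{-(k+1)}$. As a consistency check I would confirm the degree bookkeeping: using ${\mathbf L}^{\otimes 2}\,=\,TM$ from \eqref{l4}, so that $K_M\,=\,{\mathbf L}^{-2}$, together with $k\,=\,n+1$, one computes $W\otimes K^{\otimes k}_M\,=\,E\otimes{\mathbf L}^{n-2k}\,=\,E\otimes{\mathbf L}^{-(k+1)}$, which is precisely this second summand. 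This shows that $\iota$ is the inclusion of the second summand of $E\otimes({\mathcal V}_n\oplus{\mathbf L}^{-(k+1)})$.

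It then follows by dualizing that $\iota^*\,:\,J^k(W)^*\,\longrightarrow\,(W\otimes K^{\otimes k}_M)^*$ is nothing but the natural projection $E^*\otimes({\mathcal V}_n\oplus{\mathbf L}^{-(k+1)})^*\,\longrightarrow\,E^*\otimes({\mathbf L}^{-(k+1)})^*$, which is exactly the map $p'$ of the statement. Consequently the symbol homomorphism ${\rm Id}_F\otimes\iota^*$ coincides with ${\rm Id}_F\otimes p'$, as asserted. There is no genuine obstacle here: the only points requiring any attention are the elementary degree computation identifying ${\mathbf L}^{n-2k}$ with ${\mathbf L}^{-(k+1)}$, and the standard fact that the dual of the inclusion of a direct summand is the projection onto the dual of that summand.
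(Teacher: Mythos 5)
Your proof is correct and takes essentially the same route as the paper, which likewise deduces Corollary \ref{cor6} directly from Lemma \ref{lem4}; the identification of $\mathrm{kernel}(\phi_k)$ with the second summand and the dualization of the inclusion are precisely the steps the paper leaves implicit as an ``immediate consequence.'' Your degree check that $E\otimes{\mathbf L}^{n-2k}\,=\,E\otimes{\mathbf L}^{-(k+1)}$ when $k\,=\,n+1$ is exactly the verification needed to match $W\otimes K^{\otimes k}_M$ with that summand.
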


\section{Lifting of symbol}

As before, $M$ is a connected Riemann surface equipped with a projective
structure $P$, and $E$ is a holomorphic vector bundle on $M$ equipped
with a holomorphic connection $D_E$. The connection $D_E$ on $E$ induces
a holomorphic connection on $\text{End}(E)$. This induced connection on
$\text{End}(E)$ will be denoted by ${\mathcal D}_E$.

Take integers $k$, $n$ and $l$ such that at least one of the following two conditions is valid:
\begin{enumerate}
\item $n \,<\, 0$, or

\item $n,\, l \,\geq\, k$.
\end{enumerate}

Let
\begin{equation}\label{sy}
\theta_0\, \in\, H^0(M,\, \text{End}(E)\otimes{\mathbf L}^l)
\end{equation}
be a holomorphic section. Note that $\theta_0$ defines a section
$$
\theta\, \in\, H^0(M,\, J^k(\text{End}(E)\otimes{\mathbf L}^l))\, .
$$
Recall that $D_E$ induces a holomorphic connection ${\mathcal D}_E$ on $\text{End}(E)$. In view
of this connection, Lemma \ref{lem2} says that $J^k(\text{End}(E)\otimes {\mathbf L}^l)\,=\,
\text{End}(E)\otimes{\mathbf L}^{l-k}\otimes {\mathcal V}_k$. So, we have
\begin{equation}\label{sy2}
\theta\, \in\, H^0(M,\, \text{End}(E)\otimes{\mathbf L}^{l-k}\otimes {\mathcal V}_k)\, .
\end{equation}

Recall from Lemma \ref{lem2} that $J^k(E\otimes{\mathbf L}^n)\,=\, E\otimes{\mathbf L}^{n-k}\otimes
{\mathcal V}_k$. Let
$$
{\mathcal T}^0_\theta\, :\, J^k(E\otimes{\mathbf L}^n)\,=\, E\otimes{\mathbf L}^{n-k}\otimes 
{\mathcal V}_k\, \longrightarrow\, E\otimes{\mathbf L}^{l+n-2k}\otimes{\mathcal 
V}_k\otimes{\mathcal V}_k
$$
be the homomorphism defined by $a\otimes b\, \longmapsto\, \sum_i(a'_i(a)\otimes b\otimes 
b'_i)$, where $a$ and $b$ are local sections of $E$ and ${\mathbf L}^{n-k}\otimes {\mathcal 
V}_k$ respectively, while $\theta$ in \eqref{sy2} is locally expressed as $\sum_i a'_i\otimes 
b'_i$ with $a'_i$ and $b'_i$ being local sections of $\text{End}(E)$ and ${\mathbf 
L}^{l-k}\otimes {\mathcal V}_k$ respectively. Let
$$
p_0\, :\, {\mathcal V}_k\otimes{\mathcal V}_k\, \longrightarrow, {\mathcal V}_0\,=\,
{\mathcal O}_M
$$
be the projection constructed using the decomposition in Proposition \ref{prop1}. Now define the homomorphism
\begin{equation}\label{sy3}
{\mathcal T}_\theta\,:=\, (\text{Id}\otimes p_0)\circ {\mathcal T}^0_\theta\, :\,
J^k(E\otimes{\mathbf L}^n)\,=\, E\otimes{\mathbf L}^{n-k}\otimes
{\mathcal V}_k\, \longrightarrow\, E\otimes{\mathbf L}^{l+n-2k}\, ,
\end{equation}
where $\text{Id}$ denotes the identity map of $E\otimes{\mathbf L}^{l+n-2k}$. Therefore, we have
\begin{equation}\label{sy4}
{\mathcal T}_\theta\, \in\,
H^0(M,\, \text{Diff}^k(E\otimes{\mathbf L}^n,\, E\otimes{\mathbf L}^{l+n-2k}))
\end{equation}
(see \eqref{dod}). Let
$$
\widehat{\sigma}\, :\, \text{Diff}^k(E\otimes{\mathbf L}^n,\, E\otimes{\mathbf L}^{l+n-2k})\, \longrightarrow\,
\text{End}(E)\otimes {\mathbf L}^{l-2k}\otimes(TM)^{\otimes k}\,=\, \text{End}(E)\otimes {\mathbf L}^l
$$
be the symbol homomorphism in \eqref{e2p}; in \eqref{l4} it was shown that $TM\,=\, {\mathbf L}^{\otimes 2}$.

\begin{theorem}\label{thm2}
The symbol $\widehat{\sigma}({\mathcal T}_\theta)$ of the differential operator ${\mathcal T}_\theta$
in \eqref{sy4} is the section $\theta_0$ in \eqref{sy}.
\end{theorem}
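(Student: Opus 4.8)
The plan is to compute the symbol $\widehat\sigma({\mathcal T}_\theta)$ directly from its description in Corollary \ref{cor3} and then identify the outcome with $\theta_0$ by matching it against the forgetful description of the jet of $\theta_0$. By Corollary \ref{cor3} (together with Corollary \ref{cor2}), under the identification $TM\,=\,{\mathbf L}^{\otimes 2}$ the symbol homomorphism is ${\rm Id}\otimes\widetilde\iota^*$, where $\widetilde\iota\colon E\otimes{\mathbf L}^{n-2k}\hookrightarrow E\otimes{\mathbf L}^{n-k}\otimes{\mathcal V}_k\,=\,J^k(E\otimes{\mathbf L}^n)$ is the inclusion in \eqref{sb}. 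Consequently $\widehat\sigma({\mathcal T}_\theta)$ is the composite ${\mathcal T}_\theta\circ\widetilde\iota\colon E\otimes{\mathbf L}^{n-2k}\longrightarrow E\otimes{\mathbf L}^{l+n-2k}$, which is a section of ${\rm Hom}(E\otimes{\mathbf L}^{n-2k},\, E\otimes{\mathbf L}^{l+n-2k})\,=\,{\rm End}(E)\otimes{\mathbf L}^l$. So the entire theorem reduces to the identity ${\mathcal T}_\theta\circ\widetilde\iota\,=\,\theta_0$.

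First I would unravel this composite in a local frame. The embedding $\widetilde\iota$ is built from $\iota_{\mathcal L}^{\otimes k}$, so it inserts the $k$-th symmetric power $e^{\cdot k}\in{\mathcal V}_k$ of a local generator $e$ of the subbundle ${\mathcal L}\subset{\mathcal V}$ (see \eqref{bl}). Writing $\theta$ locally as $\sum_i a_i'\otimes m_i'\otimes c_i$, with $a_i'$, $m_i'$, $c_i$ local sections of ${\rm End}(E)$, ${\mathbf L}^{l-k}$, ${\mathcal V}_k$ respectively (using \eqref{sy2}), the definition of ${\mathcal T}^0_\theta$ followed by ${\rm Id}\otimes p_0$ in \eqref{sy3} shows that ${\mathcal T}_\theta\circ\widetilde\iota$ sends a local section $a\otimes e^{\otimes k}\otimes m$ to $\sum_i p_0(e^{\cdot k}\otimes c_i)\,a_i'(a)\otimes(m\otimes m_i')$. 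Thus everything is governed by the single quantity $p_0(e^{\cdot k}\otimes c_i)$, where $p_0\colon{\mathcal V}_k\otimes{\mathcal V}_k\to{\mathcal V}_0\,=\,{\mathcal O}_M$ is the invariant projection from Proposition \ref{prop1}.

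The computation is then completed by combining two observations. On one hand, iterating Proposition \ref{prop5} (equivalently Proposition \ref{prop2} and Proposition \ref{prop3}) shows that, under the isomorphism of Lemma \ref{lem2}, the iterated forgetful map $J^k({\rm End}(E)\otimes{\mathbf L}^l)\to J^0\,=\,{\rm End}(E)\otimes{\mathbf L}^l$ is ${\rm Id}\otimes{\rm Sym}^k(q_{\mathbf L})$, where $q_{\mathbf L}\colon{\mathcal V}\to{\mathbf L}$ is the quotient map; since $\theta$ is the $k$-jet prolongation of $\theta_0$, it projects to $\theta_0$ under this map, so ${\rm Sym}^k(q_{\mathbf L})$ applied to the ${\mathcal V}_k$-part of $\theta$ recovers $\theta_0$. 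On the other hand, because the line ${\mathcal L}\subset{\mathcal V}$ is isotropic for the symplectic form $\omega$ of \eqref{e5} that defines $p_0$, pairing $e^{\cdot k}$ against any symmetric monomial containing a factor from ${\mathcal L}$ vanishes; hence $p_0(e^{\cdot k}\otimes\,\cdot\,)$ factors through the quotient ${\rm Sym}^k(q_{\mathbf L})\colon{\mathcal V}_k\to{\mathbf L}^{\otimes k}$ and, after the identification ${\mathbf L}\,=\,{\mathcal L}^*$ of \eqref{l4}, coincides with the natural pairing ${\mathbf L}^{\otimes k}\otimes{\mathcal L}^{\otimes k}\to{\mathcal O}_M$ evaluated on $e^{\otimes k}$. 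Feeding this back into the formula for ${\mathcal T}_\theta\circ\widetilde\iota$ above yields exactly $\theta_0$.

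The main obstacle is the normalization in this last step: one must check that the invariant projection $p_0$ arising from the \emph{canonical} decomposition \eqref{td} in Proposition \ref{prop1} agrees \emph{on the nose}, with constant equal to $1$ rather than merely up to a $k$-dependent scalar, with the pairing induced by ${\mathbf L}\,=\,{\mathcal L}^*$; concretely, that $p_0(e^{\cdot k}\otimes f^{\cdot k})$ equals $\omega(e,f)^k$ for a symplectic pair with $e$ spanning ${\mathcal L}$ and $f$ lifting a generator of ${\mathbf L}$. Since $p_0$, $\widetilde\iota$, $\varpi_k$ and the decomposition \eqref{td} are all ${\rm SL}({\mathbb V})$--equivariant and are transported to $M$ through the projective atlas, this is a representation-theoretic statement about ${\rm Sym}^k({\mathbb V})$, and by equivariance it suffices to verify it on a single weight vector at one point of $X\,=\,{\mathbb P}({\mathbb V})$. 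Carrying out this bookkeeping with the identifications $\Psi$, $\Phi_0$ and $\omega$ is routine, but it is precisely where all the constants must be pinned down.
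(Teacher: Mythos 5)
Your proposal is correct and takes essentially the same route as the paper: the paper's proof likewise invokes Corollary \ref{cor3} and then reduces everything to the representation-theoretic observation that the invariant projection $f$ (your $p_0$) vanishes on $(L_0^{\otimes m})\otimes (L_0\otimes \mathrm{Sym}^{m-1}(\mathbb V))$, so that the induced map $\widehat f$ on $(L_0^{\otimes m})\otimes ({\mathbb V}/L_0)^{\otimes m}$ sends $v^{\otimes m}\otimes u^{\otimes m}$ to $\lambda^m$ where $v\wedge u=\lambda\cdot\omega$. The normalization you single out as the \emph{main obstacle} is precisely this last formula, which the paper asserts as an observation without further computation, so your write-up matches the published argument in both structure and level of detail.
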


\begin{proof}
This theorem can be derived from Corollary \ref{cor3} that describes the symbol. The
following is needed to derive the theorem from Corollary \ref{cor3}.

Consider the decomposition of the $\text{SL}({\mathbb V})$--module
$$
\text{Sym}^{m}({\mathbb V})\otimes \text{Sym}^{m}({\mathbb V})\,=\,
\bigoplus_{i=0}^m \text{Sym}^{2(m-i)}({\mathbb V})
$$
in \eqref{td}. Let
$$
f\, :\, \text{Sym}^{m}({\mathbb V})\otimes \text{Sym}^{m}({\mathbb V})\,\longrightarrow\,
\text{Sym}^{0}({\mathbb V})\,=\, \mathbb C
$$
be the projection constructed using this decomposition (by setting $i\,=\, m$
in the decomposition). Then for any line
$L_0\, \subset\, \mathbb V$, we have
$$
f((L^{\otimes m}_0)\otimes (L_0\otimes\text{Sym}^{m-1}({\mathbb V})))\,=\, 0\, ,
$$
so $f$ produces a homomorphism
$$
\widehat{f}\, :\, (L^{\otimes m}_0)\otimes ((\text{Sym}^{m}({\mathbb V}))/
(L_0\otimes\text{Sym}^{m-1}({\mathbb V})))\,=\, (L^{\otimes m}_0)\otimes 
({\mathbb V}/L_0)^{\otimes m}\,\longrightarrow\, {\mathbb C}\, .
$$
This homomorphism $\widehat{f}$ sends $v^{\otimes m}\otimes u^{\otimes m}$,
where $v\, \in\, L_0$ and $u\,\in\, {\mathbb V}/L_0$, to $\lambda^m\, \in\, \mathbb C$
that satisfies the equation
$$
v\wedge u\,=\, \lambda\cdot \omega\, ,
$$
where $\omega$ is the fixed element in \eqref{e5}; note that $v\wedge u$ is a well-defined element
of $\bigwedge^2 \mathbb V$. The theorem follows from this observation and Corollary \ref{cor3}.
\end{proof}

\begin{remark}
It may be mentioned that symbols do not always lift to a holomorphic differential
operator. For such an example, set the symbol to be the constant function $1$. It does
not lift to a first order holomorphic differential operator from ${\mathcal L}^n$
to ${\mathcal L}^{n}\otimes K_M$ if $g\, \geq\, 2$ and $n\, \not=\, 0$. Indeed, a
first order holomorphic differential operator from ${\mathcal L}^n$
to ${\mathcal L}^{n}\otimes K_M$ with symbol $1$ is a holomorphic connection on
${\mathcal L}^n$. But ${\mathcal L}^n$ does not admit a holomorphic connection because
its degree is nonzero.
\end{remark}

\section*{Acknowledgements}

This work has been supported by the French government through the UCAJEDI Investments in the 
Future project managed by the National Research Agency (ANR) with the reference number 
ANR2152IDEX201. The first author is partially supported by a J. C. Bose Fellowship.


\end{document}